\documentclass[reqno,11pt,a4paper]{elsarticle}
\journal{Arxiv}

\setlength{\textwidth}{7.1in}
\setlength{\oddsidemargin}{-0.4in}
\setlength{\evensidemargin}{-0.4in}
\setlength{\textheight}{9.3in}
\setlength{\topmargin}{-0.25in}
\setlength{\parindent}{15pt}
\setlength{\parskip}{5pt}

\setlength{\tabcolsep}{4pt}

\usepackage{amssymb,amsmath,graphics,psfrag,graphicx,color,xcolor,subfigure,arydshln,fancyhdr}

\definecolor{lightgreen}{rgb}{0.22,0.50,0.25}
\definecolor{lightblue}{rgb}{0.22,0.45,0.70}
\usepackage{hyperref}
\hypersetup{colorlinks=true}

\numberwithin{equation}{section}
\numberwithin{table}{section}
\numberwithin{figure}{section}

\newtheorem{lemma}{Lemma}[section]
\newtheorem{theorem}{Theorem}[section]

\newcommand\ff{\boldsymbol{f}}

\newcommand\beps{\boldsymbol{\varepsilon}}
\newcommand\bbeta{\boldsymbol{\beta}}
\newcommand\bu{\boldsymbol{u}}
\newcommand\bv{\boldsymbol{v}}
\newcommand\bx{\boldsymbol{x}}
\newcommand\bz{\boldsymbol{z}}

\newcommand\R{\mathbb{R}}

\newcommand{\norm}[1]{\ensuremath{\left\|#1\right\|}}
\newcommand*\nnorm[1]{\vert\!\vert\!\vert #1 \vert\!\vert\!\vert}

\renewcommand\O{\Omega}
\newcommand\G{\Gamma}

\renewcommand\H{\mathrm{H}}
\renewcommand\L{\mathrm{L}}

\newcommand\LO{\L^2(\O)}
\newcommand\LOO{\L_{0}^2(\O)}
\newcommand\vdiv{\mathop{\mathrm{div}}\nolimits}
\newcommand\bdiv{\mathop{\mathbf{div}}\nolimits}

\newcommand\HCUO{\H_{0}^1(\O)}
\newcommand\HsO{\H^s(\O)}
\newcommand\HusO{\H^{1+s}(\O)}

\newcommand\bn{\boldsymbol{n}}

\newcommand\bw{\boldsymbol{w}}


\newcommand\curl{\mathop{\mathbf{curl}}\nolimits}

\renewcommand\P{\mathbb P}

\newcommand\cero{\boldsymbol{0}}

\newcommand\bomega{\boldsymbol{\omega}}
\newcommand\btheta{\boldsymbol{\theta}}
\newcommand\bnabla{\boldsymbol{\nabla}}

\newcommand{\ric}{}
\newcommand{\ruben}{}
\newcommand{\vero}{}

\newenvironment{proof}{\noindent{\it Proof.}}{\hfill$\square$}

\allowdisplaybreaks
\pagestyle{fancy}
\rhead{{\small Anaya, Caraballo, Ruiz-Baier, Torres}}
\lhead{ \textit{\small Augmented FEM for Navier--Stokes equations with variable viscosity}}

\begin{document}
	\hypersetup{
		linkcolor=lightblue,
		urlcolor=lightblue,
		citecolor=lightblue
		}
	
\begin{frontmatter}

\title{On augmented finite element formulations for the Navier--Stokes equations with vorticity and variable viscosity}
\author[ubb,ci2ma]{Veronica Anaya}
\ead{vanaya@ubiobio.cl}
\author[ubb]{Rub\'en Caraballo}
\ead{ruben.caraballo1801@ubiobio.cl}
\author[monash,sechenov,unach]{Ricardo Ruiz-Baier}
\ead{ricardo.ruizbaier@monash.edu}
\author[uls]{H\'ector Torres}
\ead{htorres@userena.cl}

\address[ubb]{GIMNAP, Departamento de Matem\'atica, Universidad del B\'io-B\'io, Casilla 5-C, Concepci\'on, Chile.}
\address[ci2ma]{CI$^{\,2}\!$MA, Universidad de Concepci\' on, Chile.}	
\address[monash]{School of Mathematics, Monash University, 9 Rainforest walk, Melbourne, 3800 VIC, Australia.}
\address[sechenov]{World-Class Research Center ``Digital biodesign and personalized healthcare'', Sechenov First Moscow State Medical University, Moscow, Russia.}
\address[unach]{Universidad Adventista de Chile, Casilla 7-D, Chill\'an, Chile.}	
\address[uls]{Departamento de Matem\'aticas,  Universidad de La Serena, La Serena, Chile.}

\begin{abstract} We propose and analyse an augmented mixed finite element method for the Navier--Stokes equations written in terms of velocity, vorticity, and pressure with non-constant viscosity and  no-slip boundary conditions. The weak formulation includes least-squares terms arising from the constitutive equation and from the incompressibility condition. \ric{The theoretical and practical implications of using augmentation is discussed in detail. In addition, we use fixed--point} strategies to show the existence and uniqueness of  continuous and discrete solutions under the assumption of sufficiently small data. The method is  constructed using any compatible finite element pair for velocity and pressure as dictated by Stokes inf-sup stability, while for vorticity any generic discrete space (of arbitrary order) can be used. We establish optimal a priori error estimates. Finally, we provide a set of numerical tests in 2D and 3D illustrating the behaviour of the scheme as well as verifying the theoretical convergence rates.
\end{abstract}

\begin{keyword} Navier--Stokes equations; vorticity formulation; mixed finite elements; variable viscosity; a priori error analysis. 
\MSC 65N30; 65N12; 76D07; 65N15.
\end{keyword}

	\end{frontmatter}

\section{Introduction}
\subsection{Scope} 
Incompressible flow equations in vorticity formulation play a critical role in describing rotational flows in a natural way. A diversity of formulations are available from the relevant literature. Of particular interest to us is their formulation in terms of velocity, vorticity (the curl of velocity), and pressure. These variables were used for Stokes and Navier--Stokes flows in the seminal papers \cite{chang90,bochev99,duan03,DSScmame03}, which analyse unique solvability and propose conforming discretisations. That form has leaded to a number of generalisations in Brinkman, Oseen, and Navier--Stokes equations and exploring different types of discretisation approaches including mixed finite element \cite{ACT,anaya13,anaya16,SSIMA15}, stabilised \cite{ACPT07,SScmame07}, least-squares \cite{anaya15b,bochev97,tsai05}, discontinuous Galerkin \cite{anaya19b}, adaptive, hybrid discontinuous Galerkin \cite{cockburn}, spectral \cite{abdel20,azaiez06,bernardi06,daikh17}, and preconditioned methods \cite{vassilevski14}. 
 
Similarly to these references, in the present work we are interested in the Navier--Stokes equations written in terms of velocity, vorticity, and pressure. However the present work considers the case of variable kinematic viscosity, for which the formulations above are not applicable since the viscous contribution to stress cannot be easily split in the usual rotation form (because, in general, $-\bdiv(\nu\bnabla\bu) \neq  \nu \curl(\curl\bu) - \nu \nabla(\vdiv\bu)$). Such an addition (and addressed for vorticity-based Stokes, Brinkman, and Oseen formulations in   \cite{ern98,anaya19,anaya24}) yields a non-symmetric variational form that can be augmented using residual terms from the constitutive and mass conservation equations. 
In the present treatment, additional terms appear due to the variable viscosity, which require a regularity assumption on the viscosity gradient. In \cite{anaya19} the kinematic viscosity $\nu$ is assumed in $W^{1,\infty}(\Omega)$, and in the Darcy--heat system analysed in \cite{bernardi18} the viscosity is assumed Lipschitz continuous. For our analysis it suffices to take $\nu  \in W^{1,r^\star}(\Omega)$, with $r^\star = \frac{2r}{r-2}$, and $r \in (2,6]$ for 3D and $r\in(2,\infty)$ for 2D. For example, $\nu  \in W^{1,3}(\Omega)$ in 2D or 3D. 

The extension to the Navier--Stokes case is carried out in this paper. The unique solvability in the constant viscosity case can be established as in  \cite{azaiez06,daikh17}, utilising antisymmetry of the convection term, appropriate continuity properties, smallness assumption on data, embeddings in separable Hilbert spaces, constructing sequences of finite-dimensional spaces, and   Brouwer's fixed--point theorem. 
 In our case the solvability analysis follows by a combination of Banach fixed--point theory using the velocity as fixed--point variable, and classical Babu\v{s}ka--Brezzi theory for saddle-point problems (by grouping together the velocity and vorticity unknowns). For this we have drawn inspiration from the analysis of Navier--Stokes--Darcy from the recent work \cite{gatica20}. The second aim of this paper is to construct a family of conforming discretisations. We can use simply Stokes-stable elements for velocity and pressure, while the vorticity can be approximated with arbitrary elements. For example, if choosing continuous or discontinuous piecewise polynomials of the same polynomial degree as pressure, we get an overall optimally convergent method with the same rates in all variables \ric{and in particular for vorticity, which would not be possible if one retrieves this field from a postprocess}. 

Of note, a number of differences with respect to rotational Oseen and Navier--Stokes equations are inherited from the augmentation. First,    the convective term is written in the usual way  for the standard velocity--pressure formulation, $(\bnabla\bu)\bu$, rather than in the rotational form $\curl\bu \times \bu$. This implies that we do not need to use the Bernoulli pressure (a nonlinear function of velocity module and kinematic pressure) to treat the momentum equation \cite{benzi12,layton09,olshanskii02,linke16}.  Secondly, as in \cite{anaya15b} one counts with  higher velocity regularity than the usual $\mathrm{H}(\vdiv,\Omega)$ typically achieved in the formulations in the former list of references (this feature  facilitates the analysis when manipulating the advecting term, but it comes at the expense of giving up the exact satisfaction of the divergence-free constraint at discrete level). Third, the augmentation permits us to obtain a higher convergence rate for  vorticity, and it allows us to easily impose no-slip velocity conditions. 
Another advantage of the present formulation is that it accommodates generalisations to non-isothermal systems and the coupling with other transport effects  in the very relevant case where viscosity depends on concentration or temperature \cite{patil82,payne99,rudi17}. \ric{This latter case (the coupling with additional effects) is a common example where viscosity varies but not necessarily the density. Other such examples include systems arising after linearisation of momentum equations of non-Newtonian fluids}.

\subsection{Outline} 
The contents of the paper have been organised as follows. Functional spaces and recurrent notation will be recalled and we will present the governing equations  in terms of velocity, vorticity and pressure, and  state the augmented formulation to the Navier--Stokes problem. The solvability analysis is presented in Section~\ref{sec:wellp}. The tools used therein are standard \ric{fixed--point} arguments. The Galerkin discretisation is presented in Section~\ref{sec:FEM}, where we also derive the stability analysis. Convergence rates for particular finite element subspaces including Taylor--Hood, Bernardi--Raugel, and MINI-elements are given in Section~\ref{sec:FE}. We visit several numerical tests illustrating the convergence of the proposed method under different scenarios (including cases not covered by our analysis) are reported in Section~\ref{sec:results}, and we close with a summary of our findings as well as concluding remarks laid out in Section~\ref{sec:concl}.

\subsection{Vorticity-based Navier--Stokes equations}\label{sec:model}
Let $\O$ be a  bounded domain of $\R^d$, $d=2,3$, with Lipschitz boundary $\G=\partial\O$. \ruben{For $r\in[1,\infty)$ we denote by $\L^r(\Omega)$ the usual Lebesgue and Sobolev spaces endowed with the norm $\Vert \cdot \Vert_{0,r,\Omega}:=\left(\int_{\Omega}|\cdot|^r \right)^{1/r}$, and} for any $s\geq 0$, the symbol $\norm{\cdot}_{s,\O}$ stands
for the norm of the Hilbertian Sobolev spaces $\HsO$ or
$\HsO^d$, with the usual convention $\H^0(\O):=\LO$. All through this paper $C$ will represent some absolute constant varying at each instance. 

Let us consider the Navier--Stokes problem with non-constant viscosity
modelling the steady-state flow of an incompressible viscous fluid.
The governing equations can be rewritten   using  the velocity $\bu$, the vorticity $\bomega$
and the pressure $p$  as follows (cf. \cite{ACPT07,amoura07,bochev97}):
Given a sufficiently smooth force density $\ff$,
we seek   $(\bu,\bomega,p)$ such that
\begin{subequations}
\begin{align}\label{eq:momentum_n}
\sigma\bu+\nu\curl\bomega+(\bu\cdot\bnabla)\bu-2\beps(\bu)\nabla\nu+\nabla p & = 
  \ff & \mbox{ in } \O, \\
  \bomega-\curl\bu&=\cero & \mbox{ in } \O,\label{eq:constitutive_n} \\ 
  \vdiv\bu & =  0 & \mbox{ in } \O,  \label{eq:mass_n}\\ 
  \bu & = \cero&    \mbox{ on } \G,\label{eq:bc1_n}\\
  (p,1)_{0,\O}&=0.&\label{eq:bc2_n}
 \end{align} 
 \end{subequations}

In the model {the kinematic viscosity of the fluid is assumed such that $\nu\in W^{1,r^\star}(\Omega)$  (with $r^\star$ made precise in \eqref{eq:r} below) and}  
$0<\nu_0\le\nu\le\nu_1$. 
Moreover, the coefficient $\sigma:\Omega \to \R$ satisfies \ruben{$0\leq\sigma_0\leq \sigma(x)\leq \sigma_1$}. {In the context of Navier--Stokes/Brinkman flows, it represents the inverse of the   permeability scaled with viscosity. In addition,} 
we will assume that $\ff\in L^2(\O)^d$.

The precise derivation and analysis of non-standard types of boundary conditions for the usual velocity--vorticity--pressure formulation is a delicate matter. We do not address it here and simply consider no-slip conditions for velocity everywhere on $\Gamma$, \ric{while non-homogeneous boundary conditions on velocity can be easily incorporated through a classical lifting argument.} 

{Equation \eqref{eq:momentum_n} results from using  the following, point-wisely satisfied, useful vector identity
\begin{equation*}
\curl(\curl\bv)=-\boldsymbol{\Delta}\bv+\nabla(\vdiv\bv),
\end{equation*}
and noting that the velocity $\bu$ is solenoidal, we have the chain of identities 
\begin{align*}
-2\bdiv(\nu\beps(\bu))& =-2\nu\bdiv(\beps(\bu))-2\beps(\bu)\nabla\nu   =-\nu\boldsymbol{\Delta}\bu-2\beps(\bu)\nabla\nu=\nu\curl(\curl\bu)-2\beps(\bu)\nabla\nu.\end{align*}}

\subsection{Variational formulation}
In this section, we propose a mixed variational formulation
of system \eqref{eq:momentum_n}-\eqref{eq:bc2_n}. 
%
In the sequel it will be useful to work with the product spaces 
 $X:=\HCUO^d\times\LO^{d(d-1)/2}$ and $H:=\HCUO^d\times\LO^{d(d-1)/2}\times\LOO$ equipped with the norms
\[
\Vert (\bv,\btheta) \Vert_{X}^2:=|\bv|_{1,\O}^2+\Vert \btheta \Vert_{0,\Omega}^2 \quad \text{and}\quad  \Vert (\bv,\btheta,q)  \Vert_{H}^2:=\Vert (\bv,\btheta) \Vert_{X}^2+\Vert q \Vert_{0,\Omega}^2,
\]
respectively. To derive a weak formulation we multiply by suitable test functions, integrate over the domain and apply the usual integration by parts as well as  
the following  version of Green's formula from e.g. \cite[Theorem~2.11]{gr-1986}: 
\begin{equation*}
(\curl\bomega,\bv)_{0,\O}=(\bomega,\curl\bv)_{0,\O}+\langle\bomega\times\bn,\bv\rangle_{0,\Gamma}.
\end{equation*}
We arrive at the following augmented variational formulation
for the Navier--Stokes problem \eqref{eq:momentum_n}--\eqref{eq:bc2_n}: 
Find $((\bu,\bomega),p)\in(\HCUO^d\times\LO^{d(d-1)/2})\times\LOO$ such that 
\begin{align}\label{probform2}
\nonumber a((\bu,\bomega),(\bv,\btheta))+N(\bu;\bu,\bv)+\ric{b(\bv,p)}=&\;F(\bv,\ric{\btheta}) & \forall(\bv,\btheta)\in\HCUO^d\times\LO^{d(d-1)/2},\\
\ric{b(\bu,q)}=&\;0 & \forall q\in\LOO,
\end{align}
where the bilinear{/trilinear} forms and the linear functional are defined as: 
 \begin{align}
\label{eq:bilinear}
a((\bu,\bomega),(\bv,\btheta))& := ( \sigma\bu ,\bv)_{0,\O} + \vero{( \nu\bomega ,\btheta)_{0,\O} }
+(\nu\bomega,\curl\bv)_{0,\O}-(\nu\btheta,\curl\bu)_{0,\O}   +\kappa_1(\curl\bu,\curl\bv)_{0,\O} \nonumber \\
&\qquad +\kappa_2(\vdiv\bu,\vdiv\bv)_{0,\O}  -\kappa_1(\bomega,\curl\bv )_{0,\O}
 -2(\beps(\bu)\nabla\nu,\bv)_{0,\O}+(\bomega,\nabla\nu\times\bv)_{0,\O},
 \\
\nonumber 
N(\bw;\bu,\bv)&:= ((\bw\cdot\nabla)\bu, \bv)_{0,\O},\quad 
  b(\bv,q)  :=-(q,\vdiv\bv)_{0,\O}, \quad F(\bv,\ric{\btheta}) :=(\ff,\bv)_{0,\O}, 
\end{align}
for all $(\bu,\bomega),(\bv,\btheta)\in\HCUO^d\times\LO^{d(d-1)/2}$, and
$q\in\LOO$.

\ric{The augmentation constant $\kappa_1$ enforces the constitutive equation for vorticity \eqref{eq:constitutive_n}. This parameter is needed to establish the coercivity of the upper-left block in the whole space, for a suitable saddle-point problem arising from the Oseen linearisation. We will see in Section~\ref{sec:augment} that there is a way to circumvent the use of this parameter by showing coercivity only on an appropriately defined kernel. In the discrete counterpart, assuming $\kappa_1=0$ will require as  additional constraint that the curl of the discrete velocity space is contained in the discrete vorticity space. 

On the other hand, note that the augmentation constant $\kappa_2$ (classical in grad-div type stabilisation approaches for incompressible flow) assists with improving the local divergence-free property \cite{jenkins14}. It is useful also in  our case, since our discrete velocity will be sought in an $\mathrm{H}^1-$conforming space. This is not required in, e.g., \cite{anaya19,anaya19b}, where H(div)-conforming velocities are used. For the analysis, this parameter is required in the discrete problem for the coercivity of the corresponding operators.} 


\section{Analysis of the {continuous} variational formulation}\label{sec:wellp}
\subsection{\ric{Preliminary results}}
To establish {well-posedness} of the nonlinear problem \eqref{probform2} we use a fixed--point {strategy}. First we require some preliminary notions. 
Consider   fixed real numbers   
\begin{equation}\label{eq:r}
r \in \begin{cases}  (2, \infty)   & \text{for $ d = 2 $},  \\
 (2,6]  &  \text{for $ d = 3 $}, \end{cases} \qquad \text{and} \quad r^*=\frac{2r}{r-2},\end{equation}
to be used when estimating terms of the form 
$( \bomega, \nabla \nu \times \bv)_{0,\O}$.

We continue by recalling the following Sobolev inequality {(see \cite{adams})},
\begin{equation}\label{embeddingh1}
\Vert \phi \Vert_{0,r,\Omega} \leq C_r \Vert \phi \Vert_{1,\Omega}, \qquad \forall \phi \in \H^1(\O).
\end{equation} 
Then, we see from \eqref{embeddingh1} that
\begin{equation}\label{embeddingh1d}
\Vert \bv \Vert_{0,r,\Omega} \leq C_r d^{\frac{r-2}{2r}}\Vert \bv \Vert_{1,\Omega}, \qquad \forall \bv \in \H^1(\O)^d,
\end{equation} 
which implies that 
\begin{equation}\label{betabounded}
\vert N(\bbeta;\bu,\bv) \vert=\left\vert ( (\bbeta \cdot \nabla)\bu,\bv)_{0,\O} \right\vert \leq C_4^2 d^{1/2}\Vert \bbeta\Vert_{1,\Omega} \vert \bu\vert_{1,\Omega} \Vert \bv\Vert_{1,\Omega} , \end{equation} 
for all $\bbeta,\bv,\bu \in \H^1(\O)^d$.
Next, using the identity 
\[
N(\bbeta;\bu,\bv)+N(\bbeta;\bv,\bu)=((\bbeta \cdot \nabla)\cdot\bu,\bv)_{0,\O}+((\bbeta \cdot \nabla)\bv,\bu)_{0,\O}=- (\vdiv \bbeta,\bu\cdot \bv)_{0,\O},
\]
we can deduce from \eqref{embeddingh1d} that
\begin{equation*}
\vert N(\bbeta;\bv,\bv)\vert=\left\vert( (\bbeta \cdot \nabla)\bv,\bv)_{0,\O} \right\vert \leq \frac{C_4^2 d^{1/2}}{2}\Vert \vdiv \bbeta\Vert_{0,\Omega} \Vert \bv\Vert_{1,\Omega}^2 , \  \forall \bbeta,\bv \in \H^1(\O)^d.
\end{equation*}
As a consequence \ric{of the usual inf-sup condition for the Stokes problem (cf. \cite{gr-1986})}, we have the following result.
\begin{lemma}
 \ric{There exists $\gamma>0$,
  such that}
\begin{equation*}
\ric{\sup_{0\ne\bv\in \H_0^1(\Omega)^d}\frac{\vert
  b(\bv,q)\vert}{|\bv|_{1,\Omega}}\ge \gamma\Vert
  q\Vert_{0,\O}\quad\forall q\in\LOO.}
\end{equation*}
\end{lemma}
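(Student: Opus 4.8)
The plan is to reduce the statement directly to the velocity-only inf-sup condition displayed immediately above the lemma, using the crucial fact that the form $b((\bv,\btheta),q) = -(q,\vdiv\bv)_{0,\O}$ does not involve the vorticity test function $\btheta$ at all. The first step is to observe that on the space $\HCUO^d$ the augmented norm $\nnorm{\cdot}_{1,\O}$ coincides with the standard $\H^1$-norm: from the identity $\Vert\nabla\bv\Vert_{0,\O}^2=\Vert\curl\bv\Vert_{0,\O}^2+\Vert\vdiv\bv\Vert_{0,\O}^2$ recalled earlier, one has $\nnorm{\bv}_{1,\O}^2=\Vert\bv\Vert_{0,\O}^2+\Vert\nabla\bv\Vert_{0,\O}^2=\Vert\bv\Vert_{1,\O}^2$, so no constant — and in particular no $\nu$-dependent constant — is incurred in this identification.

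Next, given $q\in\LOO$, I would bound the supremum over all of $X$ from below by restricting it to the subspace $\{(\bv,\cero):\bv\in\HCUO^d\}$. On this subspace, $\Vert(\bv,\cero)\Vert_X=\nnorm{\bv}_{1,\O}=\Vert\bv\Vert_{1,\O}$ and $b((\bv,\cero),q)=-(q,\vdiv\bv)_{0,\O}$, whence
\begin{equation*}
\sup_{0\ne(\bv,\btheta)\in X}\frac{\vert b((\bv,\btheta),q)\vert}{\Vert(\bv,\btheta)\Vert_X}\;\ge\;\sup_{0\ne\bv\in\HCUO^d}\frac{\vert(q,\vdiv\bv)_{0,\O}\vert}{\Vert\bv\Vert_{1,\O}}\;\ge\;C\,\Vert q\Vert_{0,\O},
\end{equation*}
where the last inequality is precisely the cited inf-sup condition, with $C>0$ depending only on $\O$. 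Taking $\gamma:=C$ yields the claim, and since $C$ is independent of $\nu$ so is $\gamma$.

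There is no real obstacle here: the lemma is an immediate corollary of the preceding inf-sup estimate for the velocity, and the only point deserving a moment's care is confirming that passing from $\Vert(\bv,\btheta)\Vert_X$ to $\Vert\bv\Vert_{1,\O}$ introduces no hidden dependence on the viscosity — which it does not, because on $\HCUO^d$ the relevant norms are in fact equal.
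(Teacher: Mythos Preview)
Your proof is correct and matches the paper's approach: the paper simply states the lemma as an immediate consequence of the velocity-only inf-sup condition without spelling out any details, and your argument (restricting the supremum to pairs $(\bv,\cero)$ and using $\nnorm{\bv}_{1,\O}=\Vert\bv\Vert_{1,\O}$ on $\HCUO^d$) is exactly the natural justification behind that one-line claim.
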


Let $\varepsilon_1,\varepsilon_2,\varepsilon_3 \in \R^+$, then using Cauchy--Schwarz, Young and Poincaré inequality with constant $C_0$, we can deduce that 
\begin{align}
c(\nu \btheta,\curl \bv)_{0,\Omega} \leq &\dfrac{c \nu_1}{2\varepsilon_1}\Vert \curl \bv \Vert_{0,\Omega}^2+\dfrac{c\nu_1 \varepsilon_1}{2}\Vert \btheta \Vert_{0,\Omega}^2,\nonumber \\
2(\beps(\bv)\nabla \nu, \bv)_{0,\Omega} &\leq 2C_r C_0 d^{\frac{r-2}{2r}}\Vert \nabla \nu \Vert_{0,r^{*},\Omega} \vert \bv \vert_{1,\Omega}^2,\nonumber\\
(\btheta,\nabla \nu \times \bv)_{0,\Omega}&\leq \dfrac{2C_0^2C_r^2d^{\frac{r-2}{r}}\Vert \nabla \nu\Vert_{0,r^{*},\Omega}^2}{\varepsilon_2 \nu_0}\vert \bv \vert_{1,\Omega}^2+\dfrac{\varepsilon_2 \nu_0}{2}\Vert \btheta \Vert_{0,\Omega},\label{bounds00}\\
(\curl\bv,\nabla \nu \times \bv)_{0,\Omega}&\leq \dfrac{2C_0^2C_r^2d^{\frac{r-2}{r}}\Vert \nabla \nu\Vert_{0,r^{*},\Omega}^2}{\varepsilon_3 \nu_0}\vert \bv \vert_{1,\Omega}^2+\dfrac{\varepsilon_3 \nu_0}{2}\Vert \curl \bv \Vert_{0,\Omega},\nonumber\\
N(\bbeta;\bv,\bv) & \leq \dfrac{C_0^2C_4^2d^{\frac{1}{2}}}{2}\Vert \vdiv \bbeta \Vert_{0,\Omega} \vert \bv \vert_{1,\Omega}^2.\nonumber
\end{align}
\subsection{\ric{Analysis for the case $0<\kappa_1<\frac{2}{3}\nu_0$ and $\kappa_2>0$}}\label{sec:k1>0}

Next, and similarly \ric{to} \cite{anaya24}, we can readily state the following collection of results.
\begin{lemma}\label{lem-elip}
Let $\kappa_1,\kappa_2>0$ with $0<\kappa_1<\frac{2}{3}\nu_0$, and 
assume that $\bbeta\in \H^1(\Omega)^d$ and
\begin{equation*}
C_0^2C_r^2 d^{\frac{r-2}{r}}\Vert \nabla \nu \Vert_{0,r^*,\Omega}^2\left\{ \frac{1}{\kappa}+\frac{3}{\nu_0}\right\}+\frac{C_4^2 d^{1/2}}{2}\Vert \vdiv \bbeta\Vert_{0,\Omega} <\min\left\{ \frac{\kappa_2}{2}, \kappa_1-\frac{3\kappa_1^2}{4\nu_0}\right\},
\end{equation*}
where $\kappa=\min\{\kappa_1,\kappa_2\}$. Then, the bilinear/trilinear forms defined in \eqref{eq:bilinear} are bounded 
\begin{equation}\label{boundedoperators}
\Vert a \Vert\leq A_0 A_1, \quad \Vert b\Vert\leq 1, \quad \Vert F\Vert\leq \Vert \ff\Vert_{0,\Omega}, \quad \left\Vert a +N(\bbeta;\cdot,\cdot) \right\Vert  \leq B_0(\bbeta)B_1(\bbeta),
\end{equation}

with
\begin{align*}
A_0&\ric{:=}\max \{\sigma_1C_0^2,\nu_1+\kappa_1,\max\{\kappa_2,\nu_1+\kappa_1\}+2C_0 C_r d^{\frac{r-2}{2r}}\Vert \nabla \nu \Vert_{0,r^*,\Omega} \}^{1/2},\\
A_1&\ric{:=}\max\{\nu_1+2C_0C_r d^{\frac{r-2}{2r}}\Vert \nabla \nu \Vert_{0,r^*,\Omega}, \max\{\sigma_1C_0^2,\kappa_2,\nu_1+2\kappa_1 \}+4C_0C_r d^{\frac{r-2}{2r}}\Vert \nabla \nu \Vert_{0,r^*,\Omega}\}^{1/2},\\
B_0(\bbeta)&\ric{:=}\max \{\sigma_1C_0^2,\nu_1+\kappa_1, 
\max\{\kappa_2,\nu_1+\kappa_1\}+2C_0C_r d^{\frac{r-2}{2r}}\Vert \nabla \nu \Vert_{0,r^*,\Omega} +C_0^2C_4^2d^{1/2}| \bbeta |_{1,\Omega}\}^{1/2},\\
B_1(\bbeta)&\ric{:=}\max\{\nu_1+2C_0C_r d^{\frac{r-2}{2r}}\Vert \nabla \nu \Vert_{0,r^*,\Omega},\max\{\sigma_1C_0^2,\kappa_2,\nu_1+2\kappa_1 \}+4C_0C_r d^{\frac{r-2}{2r}}\Vert \nabla \nu \Vert_{0,r^*,\Omega}\\
& \qquad +C_0^2C_4^2d^{1/2}\Vert \bbeta \Vert_{1,\Omega}\}^{1/2}.
\end{align*}
Moreover, for a $\bbeta$ given, the bilinear form $a(\cdot,\cdot)+N(\bbeta;\cdot,\cdot)$ is $X$-elliptic 
\[
a((\bv,\btheta),(\bv,\btheta))+N(\bbeta;\bv,\bv)\geq \widehat{\alpha}(\bbeta) \Vert (\bv,\btheta) \Vert_X^2 \qquad \forall (\bv,\btheta)\in X,
\]
where $\widehat{\alpha}(\bbeta):= \widehat{\alpha}(\nu,\kappa_1,\kappa_2,\bbeta)=\min\left\{ \frac{\nu_0}{3}, \lambda \right\}$ with  
\[
\lambda:=\min\left\{ \frac{\kappa_2}{2}, \kappa_1-\frac{3\kappa_1^2}{4\nu_0}\right\}-C_0^2C_r^2 d^{\frac{r-2}{r}}\Vert \nabla \nu \Vert_{0,r^*,\Omega}^2\left\{ \frac{1}{\kappa}+\frac{3}{\nu_0}\right\}-\frac{C_4^2 d^{1/2}}{2}\Vert \vdiv \bbeta\Vert_{0,\Omega}.
\]
\end{lemma}
\ric{The continuity and coercivity properties established in Lemma~\ref{lem-elip} imply the solvability of a  linearised problem:}
\begin{lemma}\label{lem-cont}
There exists $(\bu,\bomega,p)\in \HCUO^d\times\LO^{d(d-1)/2}\times\LOO$ such that 
\begin{align}\label{Navi-Ose}
\nonumber  a((\bu,\bomega),(\bv,\btheta))+N(\bbeta;\bu, \bv)+b(\bv,p)&=\;F(\bv,
\btheta) 
  &\forall(\bv,\btheta)\in\HCUO^d\times\LO^{d(d-1)/2},\\
  b(\bu,q)&=\;0 & \forall q\in\LOO.
\end{align}
In addition, the solution satisfies the following continuous dependence on  data 
\[ \Vert (\bu,\bomega) \Vert_X \leq \frac{\Vert \ff \Vert_{0,\Omega}}{\widehat{\alpha}(\bbeta)}, \quad \Vert p\Vert_{0,\Omega}\leq \gamma^{-1}\left( 1+\frac{\left\Vert a +N(\bbeta;\cdot,\cdot) \right\Vert }{\widehat{\alpha}(\bbeta)}\right) \Vert \ff \Vert_{0,\Omega}.\]
\end{lemma}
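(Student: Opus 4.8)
The plan is to recognise that \eqref{Navi-Ose} is a \emph{linear} saddle-point problem for the pair $((\bu,\bomega),p)\in X\times\LOO$, so that its well-posedness follows at once from the classical Babu\v{s}ka--Brezzi theory once the relevant constants are gathered. Writing $\mathcal A:=a+N(\bbeta;\cdot,\cdot)$, the four ingredients are already available: $\mathcal A$ is bounded on $X\times X$ with $\Vert\mathcal A\Vert\le B_0(\bbeta)B_1(\bbeta)$ by Lemma~\ref{lem-elip}; under the smallness hypothesis on $\Vert\nabla\nu\Vert_{0,r^*,\Omega}$ and $\Vert\vdiv\bbeta\Vert_{0,\Omega}$ assumed there, $\mathcal A$ is $X$-elliptic with constant $\widehat\alpha(\bbeta)>0$; the form $b$ is bounded with $\Vert b\Vert\le1$; and $b$ satisfies the inf-sup condition with constant $\gamma>0$ from the preceding lemma. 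Since ellipticity holds on the \emph{whole} space $X$ (not merely on the kernel of $b(\cdot,q)$), the hypotheses of the abstract theory for saddle-point problems (see, e.g., \cite{gr-1986,G2014}) are met, yielding existence --- and, the problem being linear, uniqueness --- of a solution $((\bu,\bomega),p)\in X\times\LOO$.

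For the continuous dependence on data I would argue directly. Taking $(\bv,\btheta)=(\bu,\bomega)$ in the first equation of \eqref{Navi-Ose} and using the second equation with $q=p$ to annihilate $b((\bu,\bomega),p)$, I get $a((\bu,\bomega),(\bu,\bomega))+N(\bbeta;\bu,\bu)=F(\bu,\bomega)$. The right-hand side is bounded by $\Vert\ff\Vert_{0,\Omega}\Vert\bu\Vert_{0,\Omega}\le\Vert\ff\Vert_{0,\Omega}\Vert(\bu,\bomega)\Vert_X$, while the left-hand side is $\ge\widehat\alpha(\bbeta)\Vert(\bu,\bomega)\Vert_X^2$ by the ellipticity in Lemma~\ref{lem-elip}; dividing by $\Vert(\bu,\bomega)\Vert_X$ gives the first estimate. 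For the pressure I would rewrite the first equation as $b((\bv,\btheta),p)=F(\bv,\btheta)-\mathcal A((\bu,\bomega),(\bv,\btheta))$ for all $(\bv,\btheta)\in X$, whence $|b((\bv,\btheta),p)|\le\Vert\ff\Vert_{0,\Omega}\Vert(\bv,\btheta)\Vert_X+\Vert\mathcal A\Vert\,\Vert(\bu,\bomega)\Vert_X\Vert(\bv,\btheta)\Vert_X$; dividing by $\Vert(\bv,\btheta)\Vert_X$, inserting the bound $\Vert(\bu,\bomega)\Vert_X\le\Vert\ff\Vert_{0,\Omega}/\widehat\alpha(\bbeta)$, taking the supremum, and invoking the inf-sup condition $\gamma\Vert p\Vert_{0,\Omega}\le\sup|b((\bv,\btheta),p)|/\Vert(\bv,\btheta)\Vert_X$ yields the stated bound after rearranging.

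I do not expect a serious obstacle: the whole difficulty of the linearized problem has been front-loaded into Lemma~\ref{lem-elip} (boundedness and, crucially, \emph{global} ellipticity of $a+N(\bbeta;\cdot,\cdot)$ under the smallness assumptions), so the present argument is essentially a bookkeeping exercise in applying the abstract saddle-point result and chaining constants. The only point requiring some care is to keep explicit the dependence on $\bbeta$ of $\widehat\alpha(\bbeta)$ and of $B_0(\bbeta)B_1(\bbeta)$ --- in particular their dependence on $\Vert\bbeta\Vert_{1,\Omega}$ and $\Vert\vdiv\bbeta\Vert_{0,\Omega}$ --- since these constants will be reused to define and contract the fixed-point map in the velocity variable.
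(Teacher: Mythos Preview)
Your proposal is correct and follows precisely the approach the paper relies on: the lemma is stated there without proof (as a direct consequence of the classical Babu\v{s}ka--Brezzi theory together with the boundedness and global $X$-ellipticity of $a+N(\bbeta;\cdot,\cdot)$ from Lemma~\ref{lem-elip} and the inf-sup condition for $b$), and your argument spells out exactly these standard steps, including the direct derivation of the two continuous-dependence bounds.
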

In turn, Lemmas \ref{lem-elip} and \ref{lem-cont} will be used in showing the existence of a suitably defined fixed point, stated as follows.  
\begin{lemma}\label{punto_fijo}
Let $\kappa_2>0$ and $0<\kappa_1<\frac{2}{3}\nu_0$, and suppose that
\begin{equation}\label{eq:yt7}
C_0^2C_r^2 d^{\frac{r-2}{r}}\Vert \nabla \nu \Vert_{0,r^*,\Omega}^2\left\{ \frac{1}{\kappa}+\frac{3}{\nu_0}\right\} < \min\left\{ \frac{\kappa_2}{2}, \kappa_1-\frac{3\kappa_1^2}{4\nu_0}\right\}.
\end{equation}
Consider the set
\[
\mathcal{O}:=\{\bu\in \HCUO^d: |\bu|_{1,\O}\leq \delta\},
\]
defined by choosing $0<\delta<\frac{{ \overline{\alpha}}(\nu) }{C_0^2C_4^2 d^{1/2}}$  with 
\begin{equation}\label{alpha1}
\alpha(\nu):=\alpha(\nu,\kappa_1,\kappa_2)=\min\left\{ \frac{\kappa_2}{2}, \kappa_1-\frac{3\kappa_1^2}{4\nu_0}\right\}- C_0^2C_r^2 d^{\frac{r-2}{r}}\Vert \nabla \nu \Vert_{0,r^*,\Omega}^2\left\{ \frac{1}{\kappa}+\frac{3}{\nu_0}\right\},
\end{equation}
and $\overline{\alpha}(\nu):=\min\left\{ \frac{\nu_0}{3},\alpha\right\}$. If 
\begin{equation}\label{eq:yt8}
\Vert \ff \Vert_{0,\Omega} <\frac{1}{2}{ \overline{\alpha}}(\nu)  \delta,\end{equation}
then, the operator {$\mathcal{J}:\mathcal{O}\to \mathcal{O}$, with $\bbeta \mapsto \mathcal{J}(\bbeta)=\bu$ (and where $\bu$ is the velocity solution of the Oseen problem \eqref{Navi-Ose}, for a $\bbeta$ given), has} a unique fixed point in $\mathcal{O}$. 
\end{lemma}
\begin{proof}
From the {assumptions \eqref{eq:yt7}-\eqref{eq:yt8} it follows that} $\mathcal{J}$ is {well-defined} in $\mathcal{O}$.
Next, we note that for all $\bbeta \in \mathcal{O}$, {the following chain of bounds holds}  
\[
\frac{\Vert \ff\Vert_{0,\Omega}}{\delta}+\frac{C_0^2C_4^2 d^{1/2}}{2}\Vert \vdiv \bbeta \Vert_{0,\Omega}\leq \frac{\Vert \ff\Vert_{0,\Omega}}{\delta}+\frac{C_0^2C_4^2 d^{1/2}}{2}| \bbeta|_{1,\Omega}\leq \frac{1}{2}\alpha(\nu) +\frac{C_4^2 d^{1/2}\delta}{2}<\alpha(\nu),
\]
and consequently 
\[
|\mathcal{J}(\bbeta)|_{1,\Omega}\leq \frac{\Vert \ff\Vert_{0,\Omega}}{\widehat{\alpha}(\bbeta)}< \delta,
\]
which implies that $\mathcal{J}(\mathcal{O})\subset \mathcal{O}$.

Now, we have to prove that $\mathcal{J}$ is a contraction. In fact, for each $i\in \{1,2\}$ suppose that we have {$\mathcal{J}(\bbeta_i)=\bu_i$. Therefore},  for each triplet $(\bv,\btheta,q)\in \HCUO^d\times\LO^{d(d-1)/2}\times\LOO$ we have that
\begin{subequations}
\begin{align}\label{E1}
a((\bu_1-\bu_2,\bomega_1-\bomega_2),(\bv,\btheta))+b(\bv,p_1-p_2)&=N(\bbeta_2;\bu_2,\bv)-N(\bbeta_1;\bu_1,\bv),\\
\label{E2}
b(\bu_1-\bu_2,q)&=0.
\end{align}\end{subequations}
On the other hand, it is evident that 
$$
-N(\bbeta_1;\bu_1,\bv)+N(\bbeta_2;\bu_2,\bv)=N(\bbeta_2-\bbeta_1;\bu_1,\bv)+N(\bbeta_2;\bu_2-\bu_1,\bv),
$$
and using the triangle inequality {together with the bound} \eqref{betabounded} we obtain
\begin{align*}
  -N(\bbeta_1;\bu_1,\bv)+N(\bbeta_2;\bu_2,\bv) 
  \leq C_0^2C_4^2 d^{1/2}(| \bbeta_1-\bbeta_2|_{1,\O}| \bu_1|_{1,\O}| \bv |_{1,\O}+| \bbeta_2|_{1,\O}|\bu_2-\bu_1|_{1,\O}| \bv |_{1,\O}).
\end{align*} 
{In turn, t}aking $\bv=\bu_2-\bu_1$, $\btheta=\bomega_2-\bomega_1$ and using the equalities \eqref{E1}-\eqref{E2}, we can assert that 
\begin{align*}
{ \overline{\alpha}}(\nu)  \Vert (\bu_1-\bu_2,\bomega_1-\bomega_2)\Vert_{{X}}^2   \leq C_0^2C_4^2 d^{1/2}(| \bbeta_1-\bbeta_2|_{1,\O}| \bu_1|_{1,\O}| \bu_1-\bu_2 |_{1,\O}+| \bbeta_2|_{1,\O}|\bu_2-\bu_1|_{1,\O}^2),
\end{align*}
and from the above inequality and the definition of $\mathcal{O}$, we get
$$
{ \overline{\alpha}}(\nu)  \Vert (\bu_1-\bu_2,\bomega_1-\bomega_2)\Vert_{{X}}^2 \leq C_0^2C_4^2 d^{1/2}(\delta | \bbeta_1-\bbeta|_{1,\O}| \bu_1-\bu_2|_{1,\O}+\delta|\bu_2-\bu_1|_{1,\O}^2).
$$
{Recalling the choice of $\delta$ we readily see that} $\frac{ C_0^2C_4^2 d^{1/2} \delta { \overline{\alpha}(\nu)}^{-1}}{1- C_0^2C_4^2 d^{1/2} \delta { \overline{\alpha}(\nu)}^{-1}}<1$ and using the bound
$$
|\bu_2-\bu_1|_{1,\O}\leq \Vert (\bu_1-\bu_2,\bomega_1-\bomega_2)\Vert_{{X}},
$$
{we can conclude that} 
$$
 |\bu_2-\bu_1|_{1,\O} \leq \frac{ C_0^2C_4^2 d^{1/2} \delta { \overline{\alpha}} ^{-1}}{1- C_0^2C_4^2 d^{1/2} \delta { \overline{\alpha}} ^{-1}}|\bbeta_1-\bbeta_2|_{1,\O}.
$$
Then $\mathcal{J}$ is a contraction and,  \ric{thanks to Banach's fixed--point theorem, it has a unique fixed point $\bu\in \mathcal{O}$}. 
\end{proof}

\subsection{\ric{Analysis for the case $\kappa_1=0$ and $\kappa_2>0$}}\label{sec:augment}
\ric{An alternative proof can be still conducted if $\kappa_1=0$. The analysis from Section~\ref{sec:k1>0} is in such a case modified as follows. }

\ruben{
First, the counterpart of Lemma \ref{lem-elip} reads 
\begin{lemma}\label{lem-elip2}
If $\varepsilon_2\in (0,2)$ in \eqref{bounds00}, $\kappa_2>0$ and 
$$
\dfrac{\nu_0^3}{2\nu_1^2}\left( 1-\dfrac{\varepsilon_2}{2} \right)> \mathcal{C}(\nu,\bbeta), \text{   } \dfrac{9}{32}\dfrac{\nu_0^2}{\nu_1^2}\left( 1-\dfrac{\varepsilon_2}{2}\right)>\widehat{\mathcal{C}}(\nu,\bbeta),
$$
then \eqref{boundedoperators} holds with the specifications
\begin{align*}
A_0&:=\left( \sigma_1C_0^2+2\max\{ \nu_1,\kappa_2\}+\dfrac{\varepsilon_2^{1/2}\nu_0^2}{2C_0 \nu_1}\left( 1-\dfrac{\varepsilon_2}{2}\right)^{1/2} \right)^{1/2},\\
A_1&:=\left( \sigma_1C_0^2+2\max\{\nu_1,\kappa_2 \}+\dfrac{\varepsilon_2^{1/2}\nu_0^2}{ \nu_1}\left( 1-\dfrac{\varepsilon_2}{2}\right)^{1/2} \right)^{1/2},\\
B_0(\nu)&:=(A_0^2+C_0^2C_4^2d^{1/2}| \bbeta |_{1,\Omega})^{1/2}, \quad B_1(\nu):=(A_1^2+C_0^2C_4^2d^{1/2}| \bbeta |_{1,\Omega})^{1/2}.
\end{align*}
Moreover, and instead $X$-ellipticity, we now have that there exist $\widehat{\alpha}(\bbeta)$ such that
$$
\sup_{0\ne(\widehat{\bv},\widehat{\btheta})\in \mathrm{Ker}(b)\times \L^2(\Omega)}\dfrac{a((\bv,\btheta),(\widehat{\bv},\widehat{\btheta}))+N(\bbeta;\bv,\widehat{\bv})}{\Vert(\widehat{\bv},\widehat{\btheta}) \Vert_{X}}\geq \widehat{\alpha}(\bbeta) \Vert(\bv,\btheta) \Vert_{X},
$$
and 
$$
\sup_{0\ne(\widehat{\bv},\widehat{\btheta})\in \mathrm{Ker}(b)\times \L^2(\Omega)} [a((\widehat{\bv},\widehat{\btheta}),(\bv,\btheta))+N(\bbeta;\widehat{\bv},\bv)]>0,
$$
where $\mathrm{Ker}(b):=\{\bv \in  \H_0^1(\O)^d: \text{ }\vdiv \bv=0 \text{ in } \L^2(\O)\}$.
\end{lemma}
\begin{proof}
For $\bv$ with $\vdiv \bv=0$ and $c\in \R$ we have
\begin{align}
a((\bv,\btheta),(\bv, \btheta-c \curl \bv))+N(\bbeta;\bv,\bv)& \geq \sigma_0 \Vert \bv \Vert_{0,\Omega}^2 + \nu_0 \Vert \btheta \Vert_{0,\Omega}^2-c(\nu \btheta,\curl \bv)_{0,\Omega}+c(\nu \curl \bv, \curl \bv)_{0,\Omega}\nonumber \\
& \qquad +\kappa_2 \Vert \vdiv \bv \Vert_{0,\Omega}^2-2(\beps(\bv)\nabla \nu, \bv)_{0,\Omega}+(\btheta,\nabla \nu \times \bv)_{0,\Omega}+N(\bbeta;\bv,\bv)\nonumber\\
&\geq\sigma_0 \Vert \bv \Vert_{0,\Omega}^2+\lambda_1(\nu) \Vert \btheta \Vert_{0,\Omega}^2+\lambda_2(\nu,\bbeta) \Vert \curl \bv \Vert_{0,\Omega}^2 \nonumber\\
&= \sigma_0 \Vert \bv \Vert_{0,\Omega}^2+\lambda_1 (\nu)\Vert \btheta \Vert_{0,\Omega}^2+\lambda_2 (\nu,\bbeta)|  \bv |_{1,\Omega}^2,\label{curl1}
\end{align}
where 
$$
\lambda_1(\nu):=\nu_0\left( 1-\dfrac{\varepsilon_2}{2}\right)-\dfrac{c\nu_1 \varepsilon_1}{2}, \text{ }
\lambda_2(\nu,\bbeta):= c\nu_0-\dfrac{c \nu_1}{2\varepsilon_1}-\mathcal{C}(\nu,\bbeta).
$$
It suffices to choose $\varepsilon_2 \in (0,2)$, $\varepsilon_1=\dfrac{\nu_1}{\nu_0}$, $c=\dfrac{\nu_0^2}{\nu_1^2}\left( 1-\dfrac{\varepsilon_2}{2}\right)$ in \eqref{bounds00} and to assume that $\dfrac{\nu_0^3}{2\nu_1^2}\left( 1-\dfrac{\varepsilon_2}{2}\right)>\mathcal{C}(\nu,\bbeta)$, to deduce that  $\lambda_1(\nu),\lambda_2(\nu,\bbeta)\in \R^{+}$.

Then we can infer that
$$
\sup_{0\ne(\widehat{\bv},\widehat{\btheta})\in \mathrm{Ker}(b)\times \L^2(\Omega)}\dfrac{a((\bv,\btheta),(\widehat{\bv},\widehat{\btheta}))+N(\bbeta;\bv,\widehat{\bv})}{\Vert(\widehat{\bv},\widehat{\btheta}) \Vert_{X}}\geq \widehat{\alpha}(\bbeta) \Vert(\bv,\btheta) \Vert_{X},
$$
where
$$
\widehat{\alpha}(\bbeta):=\dfrac{1}{2} \min\left\{\nu_0\left(1-\dfrac{\varepsilon_2}{2}\right),\dfrac{\nu_0^3}{\nu_1^2}\left(1-\dfrac{\varepsilon_2}{2}\right)-2\mathcal{C}(\nu,\bbeta)\right\}.
$$
Furthermore we have, again for  $\bv$ with $\vdiv \bv=0$ and $c\in \R$, that 
\begin{align}
a((\bv, \btheta+c \curl \bv),(\bv,\btheta))+N(\bbeta;\bv,\bv)& \geq \sigma_0 \Vert \bv \Vert_{0,\Omega}^2 + \nu_0 \Vert \btheta \Vert_{0,\Omega}^2+c(\nu \curl \bv,\btheta)_{0,\Omega}\nonumber \\
& \qquad +c(\nu \curl \bv, \curl \bv)_{0,\Omega}+\kappa_2 \Vert \vdiv \bv \Vert_{0,\Omega}^2-2(\beps(\bv)\nabla \nu, \bv)_{0,\Omega}\nonumber\\
&\qquad+(\btheta,\nabla \nu \times \bv)_{0,\Omega}+c(\curl \bv,\nabla \nu \times \bv)_{0,\Omega}+N(\bbeta;\bv,\bv)\nonumber\\
&\geq\sigma_0 \Vert \bv \Vert_{0,\Omega}^2+\lambda_1(\nu) \Vert \btheta \Vert_{0,\Omega}^2+\lambda_3 (\nu,\bbeta)\Vert \curl \bv \Vert_{0,\Omega}^2 \nonumber\\
&= \sigma_0 \Vert \bv \Vert_{0,\Omega}^2+\lambda_1 (\nu)\Vert \btheta \Vert_{0,\Omega}^2+\lambda_3(\nu,\bbeta)|  \bv |_{1,\Omega}^2,\label{curl2}
\end{align}
where
$$
\lambda_3(\nu,\bbeta):= c\nu_0-\dfrac{\varepsilon_3\nu_0}{2}-\dfrac{c \nu_1}{2\varepsilon_1}-\widehat{\mathcal{C}}(\nu,\bbeta).
$$

Taking again $\varepsilon_2 \in (0,2)$, while choosing $\varepsilon_1=\dfrac{\nu_1}{\nu_0}$, $\varepsilon_3=\dfrac{3}{8}\dfrac{\nu_0^2}{\nu_1^2}\left( 1-\dfrac{\varepsilon_2}{2}\right)$, $ c=\dfrac{3}{4}\dfrac{\nu_0^2}{\nu_1^2}\left(1-\dfrac{\varepsilon_2}{2}\right)$, in \eqref{bounds00}
and assuming that 
$$
\dfrac{9}{32}\dfrac{\nu_0^2}{\nu_1^2}\left( 1-\dfrac{\varepsilon_2}{2}\right)>\widehat{\mathcal{C}}(\nu,\bbeta),
$$ 
we can conclude that $\lambda_1(\nu),\lambda_3(\nu,\bbeta)\in \R^{+}$, and therefore
$$
\sup_{0\ne(\widehat{\bv},\widehat{\btheta})\in \mathrm{Ker}(b)\times \L^2(\Omega)} [a((\widehat{\bv},\widehat{\btheta}),(\bv,\btheta))+N(\bbeta;\widehat{\bv},\bv)]>0.
$$
For the discrete case the lemma is analogous. We just have to add the condition $\kappa_2>\widehat{\mathcal{C}}(\nu,\bbeta)$ and modify $\widehat{\alpha}(\bbeta)$ as
$$
\widehat{\alpha}(\bbeta):=\dfrac{1}{2} \min\left\{\nu_0\left(1-\dfrac{\varepsilon_2}{2}\right),\dfrac{\nu_0^3}{\nu_1^2}\left(1-\dfrac{\varepsilon_2}{2}\right)-2\mathcal{C}(\nu,\bbeta),\kappa_2-\mathcal{C}(\nu,\bbeta)\right\}.
$$
\end{proof}

\begin{lemma}\label{punto_fijo2}
Let $\varepsilon_2\in (0,2)$ in \eqref{bounds00}, $\kappa_2>0$ and assume that  
\begin{equation}\label{asummtion1}
\dfrac{\nu_0^3}{2\nu_1^2}\left( 1-\dfrac{\varepsilon_2}{2} \right)> \mathcal{D}(\nu), \text{   } \dfrac{9}{32}\dfrac{\nu_0^2}{\nu_1^2}\left( 1-\dfrac{\varepsilon_2}{2}\right)>\widehat{\mathcal{D}}(\nu).
\end{equation}
Consider the set
$$
\mathcal{O}:=\{ \bu \in H_0^1(\Omega)^d: \quad |\bu|_{1,\Omega}\leq \delta \},
$$
defined by choosing $0<\delta <\dfrac{\overline{\alpha}(\nu)}{C_0^2C_4^2d^{1/2}}$ with
\begin{equation}\label{alpha2}
\alpha(\nu):=\dfrac{\nu_0^3}{2\nu_1^2}\left( 1-\dfrac{\varepsilon_2}{2} \right)- \mathcal{D}(\nu) \text{ and } \overline{\alpha}(\nu):=\min \left\{ \dfrac{\nu_0}{2}\left(1-\dfrac{\varepsilon_2}{2}\right),\alpha(\nu)\right\}.
\end{equation}
If 
\begin{equation}\label{asummtion2}
\Vert \ff \Vert_{0,\Omega}\leq \dfrac{1}{2}\overline{\alpha}(\nu)\delta,
\end{equation}
then, the operator $\mathcal{J}:\mathcal{O} \rightarrow \mathcal{O}$, with $\bbeta \mapsto \mathcal{J}(\bbeta)=\bu$ (and where $\bu$ is the velocity solution of the Oseen problem
\eqref{Navi-Ose}, for a $\bbeta$ given), has a unique fixed point in $\mathcal{O}$.
\end{lemma}
\begin{proof}
From the assumptions \eqref{asummtion1}-\eqref{asummtion2} it follows that $\mathcal{J}$ is well-defined in $\mathcal{O}$.
Next, we note that for all $\bbeta\in \mathcal{O}$, the following chain of bounds holds
\[
\frac{\Vert \ff\Vert_{0,\Omega}}{\delta}+\frac{C_4^2 d^{1/2}}{2}\Vert \vdiv \bbeta \Vert_{0,\Omega}\leq \frac{1}{2}\alpha(\nu) +\frac{C_4^2 d^{1/2}\delta}{2}<\alpha(\nu),
\]
and consequently 
\[
\nnorm{\mathcal{J}(\bbeta)}_{1,\Omega}\leq \frac{\Vert \ff\Vert_{0,\Omega}}{\widehat{\alpha}(\bbeta)}< \delta,
\]
which implies that $\mathcal{J}(\mathcal{O})\subset \mathcal{O}$.

To show  that $\mathcal{J}$ is a contraction we suppose that we have $\mathcal{J}(\bbeta_i)=\bu_i$, $i=1,2$. Then for each $(\bv,\btheta,q)\in \HCUO^d\times\LO^{d(d-1)/2}\times\LOO$ we have that
\begin{subequations}
\begin{align}\label{E12}
a((\bu_1-\bu_2,\bomega_1-\bomega_2),(\bv,\btheta))+b(\bv,p_1-p_2)&=N(\bbeta_2;\bu_2,\bv)-N(\bbeta_1;\bu_1,\bv),\\
\label{E22}
b(\bu_1-\bu_2,q)&=0.
\end{align}\end{subequations}

On the other hand, it is evident that 
$$
-N(\bbeta_1;\bu_1,\bv)+N(\bbeta_2;\bu_2,\bv)=N(\bbeta_2-\bbeta_1;\bu_1,\bv)+N(\bbeta_2;\bu_2-\bu_1,\bv),
$$
and using the triangle inequality {together with the bound} \eqref{betabounded} we obtain
\begin{align*}
  -N(\bbeta_1;\bu_1,\bv)+N(\bbeta_2;\bu_2,\bv) 
  \leq C_0^2C_4^2 d^{1/2}(| \bbeta_1-\bbeta_2|_{1,\O}| \bu_1|_{1,\O}| \bv |_{1,\O}+| \bbeta_2|_{1,\O}|\bu_2-\bu_1|_{1,\O}| \bv |_{1,\O}).
\end{align*} 
{In turn, t}aking $\bv=\bu_2-\bu_1$, $\btheta=\bomega_2-\bomega_1+c\curl(\bu_2-\bu_1)$ and using the equalities \eqref{E12}-\eqref{E22}, we can assert that 
\begin{align*}
{ \overline{\alpha}}(\nu)  \Vert (\bu_1-\bu_2,\bomega_1-\bomega_2)\Vert^2   \leq C_0^2C_4^2 d^{1/2}(| \bbeta_1-\bbeta_2|_{1,\O}| \bu_1|_{1,\O}| \bu_1-\bu_2 |_{1,\O}+| \bbeta_2|_{1,\O}|\bu_2-\bu_1|_{1,\O}^2),
\end{align*}
and from the above inequality and the definition of $\mathcal{O}$, we get
$$
{ \overline{\alpha}(\nu)}  \Vert (\bu_1-\bu_2,\bomega_1-\bomega_2)\Vert^2 \leq C_0^2C_4^2 d^{1/2}(\delta | \bbeta_1-\bbeta_2|_{1,\O}| \bu_1-\bu_2|_{1,\O}+\delta |\bu_2-\bu_1|_{1,\O}^2).
$$
{Recalling the choice of $\delta$ we readily see that} $\frac{ C_0^2C_4^2 d^{1/2} \delta { \overline{\alpha}(\nu)}^{-1}}{1-C_0^2 C_4^2 d^{1/2} \delta { \overline{\alpha}(\nu)}^{-1}}<1$ and using the bound
$$
|\bu_2-\bu_1|_{1,\O}\leq \Vert (\bu_1-\bu_2,\bomega_1-\bomega_2)\Vert_{X},
$$
{we can conclude that} 
$$
 |\bu_2-\bu_1|_{1,\O} \leq \frac{ C_0^2C_4^2 d^{1/2} \delta { \overline{\alpha}(\nu)}^{-1}}{1-C_0^2 C_4^2 d^{1/2} \delta { \overline{\alpha}(\nu)}^{-1}}|\bbeta_1-\bbeta_2|_{1,\O}.
$$
Then $\mathcal{J}$ is a contraction and,  thanks to Banach's fixed--point theorem, it has a unique fixed point $\bu\in \mathcal{O}$. 
\end{proof}

Owing to the previous results, we have the unique solvability of \eqref{probform2},  stated in the following theorem. 
\begin{theorem}
Let $\ff \in \LO^d$, and proceed under the assumptions of Lemma~\ref{punto_fijo} if $\kappa_1>0$ or of Lemma~\ref{punto_fijo2} if $\kappa_1=0$. Then the operator $\mathcal{J}$ has a unique fixed point $\bu \in \mathcal{O}$. Equivalently, problem \eqref{probform2} has a unique solution
$(\bu,\bomega,p) \in \HCUO^d\times\LO^{d(d-1)/2}\times\LOO$ with $\bu \in \mathcal{O}.$
\end{theorem}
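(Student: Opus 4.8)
The plan is to leverage the fixed-point machinery already established in Lemma~\ref{punto_fijo} and the Oseen-problem solvability in Lemma~\ref{lem-cont}, so that the proof of the theorem is essentially a matter of translating between the language of fixed points of $\mathcal{J}$ and the language of solutions of the nonlinear problem \eqref{probform2}. First I would invoke Lemma~\ref{punto_fijo} directly: under the stated hypotheses (which are precisely \eqref{eq:yt7}-\eqref{eq:yt8} together with $0<\kappa_1<\tfrac{2}{3}\nu_0$ and the admissible choice of $\delta$), the operator $\mathcal{J}:\mathcal{O}\to\mathcal{O}$ is well-defined, maps $\mathcal{O}$ into itself, and is a contraction, so Banach's fixed-point theorem furnishes a unique $\bu\in\mathcal{O}$ with $\mathcal{J}(\bu)=\bu$.

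Next I would establish the equivalence ``$\bu$ is a fixed point of $\mathcal{J}$'' $\iff$ ``$(\bu,\bomega,p)$ solves \eqref{probform2}''. For the forward direction: if $\mathcal{J}(\bu)=\bu$, then by definition of $\mathcal{J}$ there exist $\bomega\in\LO^{d(d-1)/2}$ and $p\in\LOO$ such that $(\bu,\bomega,p)$ solves the Oseen problem \eqref{Navi-Ose} with $\bbeta=\bu$; but with $\bbeta=\bu$ the trilinear term $N(\bbeta;\bu,\bv)$ becomes exactly $N(\bu;\bu,\bv)$, so $(\bu,\bomega,p)$ satisfies \eqref{probform2} verbatim, and moreover $\bu\in\mathcal{O}$. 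For the converse: if $(\bu,\bomega,p)$ solves \eqref{probform2} with $\bu\in\mathcal{O}$, then reading \eqref{probform2} as the Oseen problem \eqref{Navi-Ose} with the frozen advecting field $\bbeta=\bu$ shows that its velocity component coincides with $\mathcal{J}(\bu)$, hence $\mathcal{J}(\bu)=\bu$.

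The only genuine subtlety — and the one place where a little care is needed — is \emph{uniqueness of the full triple} $(\bu,\bomega,p)$, not merely of the velocity. The fixed-point argument delivers a unique $\bu$; once $\bu$ is fixed, $(\bomega,p)$ are the remaining unknowns of the linear Oseen problem \eqref{Navi-Ose} with $\bbeta=\bu$, and that problem is uniquely solvable by Lemma~\ref{lem-cont} (its well-posedness rests on the ellipticity of $a(\cdot,\cdot)+N(\bbeta;\cdot,\cdot)$ on the kernel of $b$ and the inf-sup condition for $b$, both guaranteed under the running hypotheses since $\bbeta=\bu\in\mathcal{O}$ makes $\widehat{\alpha}(\bbeta)\geq\overline{\alpha}>0$). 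Hence $\bomega$ and $p$ are uniquely determined by $\bu$, and the triple is unique. I do not expect any real obstacle here; the proof is a short assembly step, and I would keep it to a few lines, citing the uniqueness of $\bu$ from Lemma~\ref{punto_fijo}, the equivalence just sketched, and the uniqueness of $(\bomega,p)$ from Lemma~\ref{lem-cont}.
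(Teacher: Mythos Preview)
Your proposal is correct and aligns with the paper's approach: the paper in fact omits an explicit proof altogether, merely remarking that ``thanks to the previous result'' (Lemma~\ref{punto_fijo}) the unique solvability of \eqref{probform2} follows. Your argument spells out precisely the implicit reasoning --- the fixed-point from Lemma~\ref{punto_fijo}, the tautological equivalence between fixed points of $\mathcal{J}$ and solutions of \eqref{probform2}, and the uniqueness of $(\bomega,p)$ via Lemma~\ref{lem-cont} --- so it is more detailed than, but entirely consistent with, the paper's treatment.
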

}


\section{Galerkin discretisation and error estimates} \label{sec:FEM}
In this section we formulate a discrete problem associated {with} \eqref{probform2} taking generic finite dimensional subspaces that yield unique solvability and a C\'ea estimate. We also derive a priori error estimates, and provide the {rate of convergence expected for some examples of well-known finite element families.} 

\subsection{{Preliminaries and unique solvability}}
Let $\{\mathcal{ T}_{h}(\Omega)\}_{h>0}$ be a shape-regular
family of partitions of the polygon/polyhedron 
$\bar\O$, by triangles / tetrahedra $T$ of diameter $h_T$,
with mesh size $h:=\max\{h_T:\; T\in\mathcal{T}_{h}(\O)\}$.
Given an integer $k\ge0$ and a subset
$S$ of $\R^d$, the symbol $\mathbb{P}_k(S)$ denotes the space of polynomial functions
defined in $S$ of total degree up to $k$.
We consider generic finite dimensional subspaces
\ric{$V_h\subseteq\HCUO^d$ 
  and $Q_h\subseteq\LOO$ such that}
\begin{equation}\label{inf-sup-d}
\sup_{\ric{0\ne \bv_h \in V_h}}\frac{\vert
b(\bv_h,q_h)\vert}{\ric{|\bv_h|_{1,\Omega}}}
\ge \gamma_0\Vert q_h\Vert_{0,\O}\quad\forall q_h\in Q_h,
\end{equation}
where $\gamma>0$ is independent of $h$. {Relation \eqref{inf-sup-d} 
is satisfied for inf-sup stable pairs for
the Stokes problem}. In turn, 
the discrete space $W_h\subseteq \LO^{d(d-1)/2}$
for  vorticity can be taken as a continuous
or discontinuous polynomial space; both options will be addressed. 
 The Galerkin  problem reads: 
 Find $(\bu_h,\bomega_h,p_h)\in V_h\times W_h\times Q_h$ such that 
\begin{align}\label{probform2d}
\nonumber a((\bu_h,\bomega_h),(\bv_h,\btheta_h))+N(\bu_h;\bu_h,\bv_h) +b(\bv_h,p_h)=&\;F(\bv_h,
\btheta_h) & 
 \forall(\bv_h,\btheta_h)&\in V_h\times W_h,\\
  b(\bu_h,q_h)=&\;0 &  \forall q\in Q_h.
\end{align}

As in the  continuous analysis,  problem \eqref{probform2d} can be equivalently written as a fixed--point problem: 
 Find $\bu_h \in \mathcal{O}_h$ (where the discrete space is defined in \eqref{eq:Oh}) such that 
\begin{equation*}
\mathcal{J}_h(\bu_h)=\bu_h,
\end{equation*}
{where $\mathcal{J}_h: \mathcal{O}_h \subset V_h \to V_h$ is defined by $\bbeta_h \mapsto \mathcal{J}(\bbeta_h)=\bu_h$, where $(\bu_h,\bomega_h,p_h) \in V_h \times W_h\times Q_h$ is the unique solution of the discrete Oseen problem:} 
 Find $(\bu_h,\bomega_h,p_h)\in V_h\times W_h\times Q_h$ such that 
\begin{align*}
a((\bu_h,\bomega_h),(\bv_h,\btheta_h))+N(\bbeta_h;\bu_h, \bv_h)+b(\bv_h,p_h)=&\;G(\bv_h,
\btheta_h) 
& \forall(\bv_h,\btheta_h)\in V_h\times W_h,\\
b(\bu_h,q_h)=&\;0 &\forall q_h\in Q_h.
\end{align*}

{In order to show the unique solvability of  \eqref{probform2d}, we proceed to 
first establish the well-definedness of the operator $\mathcal{J}_h$, then that $\mathcal{J}_h(\mathcal{O}_h)\subset \mathcal{O}_h$,  and then that $\mathcal{J}_h$ is a contraction.} 

\begin{lemma}\label{punto_fijo_d}
{Under the same assumptions as in Lemma~\ref{punto_fijo}} for $\kappa_1>0$ and as in Lemma~\ref{punto_fijo2} for $\kappa_1=0$;  
we choose $0<\delta<\frac{{\overline{\alpha}(\nu)} }{C_0^2C_4^2 d^{1/2}}$ with $\overline{\alpha}(\nu)$ as defined in \eqref{alpha1}, \eqref{alpha2} and denote the set
\begin{equation}\label{eq:Oh}
\mathcal{O}_h:=\{{\bu_h}\in V_h: |{\bu_h}|_{1,\O}\leq \delta\}.
\end{equation}
If
$
\Vert \ff \Vert_{0,\Omega} <\frac{1}{2}{ \overline{\alpha}(\nu)}  \delta,
$
then   the operator $\mathcal{J}_h$ has a unique fixed point in $\mathcal{O}_h$.
\end{lemma}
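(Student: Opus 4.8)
The plan is to transcribe, \emph{mutatis mutandis}, the three-step fixed-point argument used in the proof of Lemma~\ref{punto_fijo}. The only genuinely new ingredient is the discrete inf-sup condition \eqref{inf-sup-d}, which plays the role of its continuous counterpart; everything else — boundedness of the forms, the $X$-ellipticity of $a(\cdot,\cdot)+N(\bbeta_h;\cdot,\cdot)$, and the trilinear estimate \eqref{betabounded} — is inherited \emph{for free}, since the forms are unchanged and the subspaces $V_h\times W_h\subseteq X$, $Q_h\subseteq\LOO$ are conforming.

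\textbf{Step 1: $\mathcal{J}_h$ is well defined.} Fix $\bbeta_h\in\mathcal{O}_h$, so that $\nnorm{\bbeta_h}_{1,\O}\le\delta$. Because $V_h\times W_h\subseteq X$, Lemma~\ref{lem-elip} applies verbatim on this subspace: under \eqref{eq:yt7} the form $a(\cdot,\cdot)+N(\bbeta_h;\cdot,\cdot)$ is bounded and $X$-elliptic on $V_h\times W_h$ with the very same constants $B_0(\bbeta_h)B_1(\bbeta_h)$ and $\widehat{\alpha}(\bbeta_h)>0$. Combining this with the discrete inf-sup condition \eqref{inf-sup-d} for $b(\cdot,\cdot)$, the Babu\v{s}ka--Brezzi theory for saddle-point problems, now posed on the finite-dimensional spaces $V_h\times W_h$ and $Q_h$, delivers a unique solution $(\bu_h,\bomega_h,p_h)$ of the discrete Oseen problem, together with the discrete stability bounds
\begin{equation*}
\Vert (\bu_h,\bomega_h) \Vert_X \leq \frac{\Vert \ff \Vert_{0,\O}}{\widehat{\alpha}(\bbeta_h)}, \qquad \Vert p_h\Vert_{0,\O}\leq \gamma_0^{-1}\left( 1+\frac{\left\Vert a +N(\bbeta_h;\cdot,\cdot) \right\Vert }{\widehat{\alpha}(\bbeta_h)}\right) \Vert \ff \Vert_{0,\O},
\end{equation*}
i.e., the discrete analogue of Lemma~\ref{lem-cont}. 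Hence $\mathcal{J}_h(\bbeta_h):=\bu_h$ is well defined on $\mathcal{O}_h$.

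\textbf{Step 2: $\mathcal{J}_h(\mathcal{O}_h)\subseteq\mathcal{O}_h$ and $\mathcal{J}_h$ is a contraction.} For $\bbeta_h\in\mathcal{O}_h$ one repeats verbatim the chain of inequalities in the proof of Lemma~\ref{punto_fijo}: using $\Vert\ff\Vert_{0,\O}<\frac12\overline{\alpha}\delta$, $\nnorm{\bbeta_h}_{1,\O}\le\delta$ and $\delta<\overline{\alpha}/(C_4^2 d^{1/2})$ one obtains $\Vert\ff\Vert_{0,\O}/\delta+\frac{C_4^2 d^{1/2}}{2}\Vert\vdiv\bbeta_h\Vert_{0,\O}<\alpha$, whence $\Vert\ff\Vert_{0,\O}<\delta\,\widehat{\alpha}(\bbeta_h)$ and therefore $\nnorm{\mathcal{J}_h(\bbeta_h)}_{1,\O}\le\Vert\ff\Vert_{0,\O}/\widehat{\alpha}(\bbeta_h)<\delta$. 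For the contraction, let $\bbeta_{h,1},\bbeta_{h,2}\in\mathcal{O}_h$ and set $\bu_{h,i}=\mathcal{J}_h(\bbeta_{h,i})$ with associated $\bomega_{h,i}$, $p_{h,i}$. Subtracting the two discrete Oseen systems, testing the momentum equation with $(\bv_h,\btheta_h)=(\bu_{h,2}-\bu_{h,1},\bomega_{h,2}-\bomega_{h,1})\in V_h\times W_h$ and using the second equation with $q_h=p_{h,2}-p_{h,1}\in Q_h$ to annihilate the pressure contribution, one invokes the $X$-ellipticity on $V_h\times W_h$, the splitting $-N(\bbeta_{h,1};\bu_{h,1},\cdot)+N(\bbeta_{h,2};\bu_{h,2},\cdot)=N(\bbeta_{h,2}-\bbeta_{h,1};\bu_{h,1},\cdot)+N(\bbeta_{h,2};\bu_{h,2}-\bu_{h,1},\cdot)$, and the bound \eqref{betabounded}, to reach, exactly as in Lemma~\ref{punto_fijo},
\begin{equation*}
\nnorm{\bu_{h,2}-\bu_{h,1}}_{1,\O}\le \frac{C_4^2 d^{1/2}\delta\,\overline{\alpha}^{-1}}{1-C_4^2 d^{1/2}\delta\,\overline{\alpha}^{-1}}\,\nnorm{\bbeta_{h,1}-\bbeta_{h,2}}_{1,\O},
\end{equation*}
with the constant strictly below one by the choice of $\delta$. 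Banach's fixed-point theorem then yields a unique $\bu_h\in\mathcal{O}_h$ with $\mathcal{J}_h(\bu_h)=\bu_h$.

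\textbf{On the main difficulty.} There is essentially no new analytical obstacle: the entire content of the proof is to check that every estimate underpinning Lemma~\ref{punto_fijo} survives the restriction to the conforming subspaces, which it does. The only ingredient that must genuinely be \emph{assumed} rather than inherited is the inf-sup stability of $b(\cdot,\cdot)$ over $V_h\times W_h\times Q_h$, that is \eqref{inf-sup-d}; as noted right after that display, it holds as soon as $(V_h,Q_h)$ is a Stokes-stable pair, with $W_h$ an arbitrary (continuous or discontinuous) polynomial space, and no additional mesh restriction or extra regularity is needed.
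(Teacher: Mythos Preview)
Your proposal is correct and follows precisely the approach the paper intends: its own proof consists of the single sentence ``The proof is analogous to that of Lemma~\ref{punto_fijo},'' and what you have written is a faithful, detailed unpacking of that analogy, with the discrete inf-sup condition \eqref{inf-sup-d} replacing its continuous counterpart and all remaining estimates inherited by conformity.
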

\begin{proof}
Provided that we can choose $\btheta_h=\curl \bu_h$ in the steps \eqref{curl1}, \eqref{curl2}, the proof is {analogous to that of Lemma} \ref{punto_fijo} in the $\kappa_1>0$ case and Lemma \ref{punto_fijo2} in the case $\kappa_1=0$. This issue motivates the need of taking $W_h$ such that $\curl V_h \subset W_h$.
\end{proof}

And as an immediate consequence of Lemma~\ref{punto_fijo_d}, we have the following result.
\begin{theorem}
Under the assumptions of Lemma~\ref{punto_fijo_d}, the operator $\mathcal{J}_h$ has a unique fixed point $\bu_h \in \mathcal{O}_h$. Equivalently, problem \eqref{probform2d} has a unique solution $(\bu_h,\bomega_h,p_h) \in V_h\times W_h\times Q_h$ satisfying  $\bu_h \in \mathcal{O}_h.$
\end{theorem}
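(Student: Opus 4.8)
The plan is to mirror the structure of the proof of Lemma~\ref{punto_fijo} at the discrete level, since the hypotheses are identical and the discrete spaces are conforming, $V_h\subseteq\HCUO^d$, $W_h\subseteq\LO^{d(d-1)/2}$, $Q_h\subseteq\LOO$, so that all the bounds and the ellipticity estimate of Lemma~\ref{lem-elip} hold verbatim when restricted to $V_h\times W_h$. First I would verify that $\mathcal{J}_h$ is well-defined: given $\bbeta_h\in\mathcal{O}_h$, the discrete Oseen problem is linear, and since $a(\cdot,\cdot)+N(\bbeta_h;\cdot,\cdot)$ is elliptic on $X$ with constant $\widehat\alpha(\bbeta_h)\geq\overline\alpha>0$ (using $\nnorm{\bbeta_h}_{1,\Omega}\le\delta<\overline\alpha/(C_4^2d^{1/2})$ so that the smallness condition in Lemma~\ref{lem-elip} is met, exactly as in the continuous case), and the pair $(V_h,Q_h)$ satisfies the discrete inf-sup condition \eqref{inf-sup-d}, the classical Babu\v{s}ka--Brezzi theory for saddle-point problems gives a unique discrete solution $(\bu_h,\bomega_h,p_h)$. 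Hence $\bbeta_h\mapsto\bu_h$ is well-defined on $\mathcal{O}_h$.

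Next I would show $\mathcal{J}_h(\mathcal{O}_h)\subset\mathcal{O}_h$. Taking $(\bv_h,\btheta_h)=(\bu_h,\bomega_h)$ and $q_h=p_h$ in the discrete Oseen system and using the discrete ellipticity together with $N(\bbeta_h;\bu_h,\bu_h)$ bounded below via the antisymmetry identity as in Lemma~\ref{lem-elip}, one gets $\nnorm{\bu_h}_{1,\Omega}\le\Vert(\bu_h,\bomega_h)\Vert_X\le\Vert\ff\Vert_{0,\Omega}/\widehat\alpha(\bbeta_h)$. Then the same chain of inequalities used in Lemma~\ref{punto_fijo},
\[
\frac{\Vert\ff\Vert_{0,\Omega}}{\delta}+\frac{C_4^2d^{1/2}}{2}\Vert\vdiv\bbeta_h\Vert_{0,\Omega}\le\frac{1}{2}\alpha+\frac{C_4^2d^{1/2}\delta}{2}<\alpha,
\]
combined with $\Vert\ff\Vert_{0,\Omega}<\tfrac12\overline\alpha\delta$, yields $\nnorm{\mathcal{J}_h(\bbeta_h)}_{1,\Omega}<\delta$, i.e.\ $\mathcal{J}_h(\bbeta_h)\in\mathcal{O}_h$.

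For the contraction step I would take two data $\bbeta_{1,h},\bbeta_{2,h}\in\mathcal{O}_h$ with images $\bu_{1,h},\bu_{2,h}$, subtract the two discrete Oseen systems, split $N(\bbeta_{1,h};\bu_{1,h},\bv_h)-N(\bbeta_{2,h};\bu_{2,h},\bv_h)=N(\bbeta_{2,h}-\bbeta_{1,h};\bu_{1,h},\bv_h)+N(\bbeta_{2,h};\bu_{2,h}-\bu_{1,h},\bv_h)$, test with $\bv_h=\bu_{2,h}-\bu_{1,h}$, $\btheta_h=\bomega_{2,h}-\bomega_{1,h}$, $q_h=p_{1,h}-p_{2,h}$, use \eqref{betabounded} and the discrete ellipticity to arrive at
\[
\nnorm{\bu_{2,h}-\bu_{1,h}}_{1,\Omega}\le\frac{C_4^2d^{1/2}\delta\,\overline\alpha^{-1}}{1-C_4^2d^{1/2}\delta\,\overline\alpha^{-1}}\nnorm{\bbeta_{1,h}-\bbeta_{2,h}}_{1,\Omega},
\]
where the choice $\delta<\overline\alpha/(C_4^2d^{1/2})$ makes the constant strictly less than $1$. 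Banach's fixed-point theorem then gives the unique fixed point $\bu_h\in\mathcal{O}_h$. Since the whole argument is word-for-word the continuous one with $X$ replaced by $V_h\times W_h$ and the continuous inf-sup replaced by \eqref{inf-sup-d}, the only point worth checking carefully — and the main (minor) obstacle — is that the discrete inf-sup constant $\gamma_0$ and the conformity of the spaces really allow the saddle-point solvability and the continuous-dependence bound to transfer unchanged; this is exactly what the assumption that $(V_h,Q_h)$ is Stokes inf-sup stable guarantees. I would simply state that the proof is analogous to that of Lemma~\ref{punto_fijo}, noting these correspondences.
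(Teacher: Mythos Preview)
Your proposal is correct and takes essentially the same approach as the paper: the paper states the theorem as an immediate consequence of Lemma~\ref{punto_fijo_d}, whose proof it declares analogous to that of Lemma~\ref{punto_fijo}, and you have simply written out that analogous argument (well-definedness via discrete Babu\v{s}ka--Brezzi with \eqref{inf-sup-d}, self-mapping of $\mathcal{O}_h$, and contraction via the same splitting of $N$), relying on conformity so that the ellipticity and continuity bounds of Lemma~\ref{lem-elip} carry over verbatim. Your closing remark that one could ``simply state that the proof is analogous to that of Lemma~\ref{punto_fijo}'' is in fact exactly what the paper does.
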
 

\subsection{The C\'ea estimate}
Our next objective is to obtain  a best approximation result for \eqref{probform2d}. Let $(\bu,\bomega,p)$ and $(\bu_h,\bomega_h,p_h)$ be the unique solutions of \eqref{probform2} and \eqref{probform2d}, respectively. It is readily observed that the following error equation is satisfied:
\begin{align}\label{error-eq}
\nonumber 
a((e_{\bu},e_{\bomega}),(\bv_h,\btheta_h))+N(\bu;\bu, \bv_h) -N(\bu_h;\bu_h, \bv_h)+b(\bv_h,e_{p})&=\;0&  \forall(\bv_h,\btheta_h)\in V_h\times W_h,\\
  b(e_{\bu},q_h)& =\;0 &\forall q_h\in Q_h,
\end{align}
where $e_{\bu}:=\bu-\bu_h$,  $e_{\bomega}:=\bomega-\bomega_h$,  $e_{p}:=p-p_h$, denote  the corresponding errors. 
Given $(\bv_h,\btheta_h,q_h)\in V_h\times W_h\times Q_h$, {we decompose these errors as} 
\begin{gather*}
e_{\bu}=\bz_{\bu}+\bx_{\bu}=(\bu-\bv_h)+(\bv_h-\bu_h),\quad 
 e_{\bomega}=\bz_{\bomega}+\bx_{\bomega}=(\bomega-\btheta_h)+(\btheta_h-\bomega_h),\\
e_{p}=z_p+x_p=(p-q_h)+(q_h-p_h),\end{gather*}
where $\bx_{\bu} \in V_h, \bx_{\bomega} \in W_h$ and $x_p \in Q_h$.
With these notations in hand, we have the following result, valid for $\kappa_1>0$.

\begin{theorem}\label{Cea} 
Consider the  assumptions of Lemma~\ref{punto_fijo} and 
choose $0<\delta<\frac{{\overline{\alpha}} }{2C_4^2 d^{1/2}}$.
If 
$
\Vert \ff \Vert_{0,\Omega} <\frac{1}{2}{\overline{\alpha}}  \delta,
$
then there exists a positive constant ${C}$ independent of $h$ such that  
\[
\Vert (\bu-\bu_h,\bomega-\bomega_h)\Vert_{X} + \| p-p_h\|_{0,\Omega} \leq   {C}\displaystyle\inf_{(\bv_h,\btheta_h,q_h)\in V_h\times W_h\times Q_h}  \{\ric{\Vert (\bu-\bv_h,\bomega-\btheta_h)\Vert_{X}}
+\Vert p-q_h\Vert_{0,\O} \}{.}
\]
\end{theorem}
\begin{proof}
Let $(\bv_h,\btheta_h,q_h)\in V_h\times W_h\times Q_h$. From the definition of $a$ and grouping terms, we have 
\begin{align*}
a((\bx_{\bu},\bx_{\bomega}),(\bx_{\bu},\bx_{\bomega}))&=a((e_{\bu},e_{\bomega}),(\bx_{\bu},\bx_{\bomega}))-(\sigma \bz_{\bu},\bx_{\bu})_{0,\O}-(\nu \bz_{\bomega}, \bx_{\bomega})_{0,\O}-(\nu \bz_{\bomega}, \curl \bx_{\bu})_{0,\O}\\
&\quad+(\nu \bx_{\bomega}, \curl \bz_{\bu})_{0,\O} -\kappa_{1}(\curl \bz_{\bu}, \curl \bx_{\bu})_{0,\O}-\kappa_2(\vdiv \bz_{\bu},\vdiv \bx_{\bu})_{0,\O}\\
&\quad+\kappa_1(\bz_{\bomega},\curl \bx_{\bu})_{0,\O}+2(\beps(\bz_{\bu})\nabla \nu, \bx_{\bu})_{0,\O}-(\bz_{\bomega},\nabla \nu \times \bx_{\bu})_{0,\O}.
\end{align*}
Next we invoke  the ellipticity of $a(\cdot,\cdot)$, from which it follows that
\begin{align*}
{\overline{\alpha}}(\nu) \Vert (\bx_{\bu},\bx_{\bomega})\Vert_{{X}}^2 &\leq a((e_{\bu},e_{\bomega}),(\bx_{\bu},\bx_{\bomega}))+\sigma \Vert \bz_{\bu} \Vert_{0,\O}\Vert \bx_{\bu}\Vert_{0,\O}+\nu_1\Vert \bz_{\bomega}\Vert_{0,\O} \Vert \bx_{\bomega}\Vert_{0,\O}   +\nu_1\Vert \bz_{\bomega}\Vert_{0,\O}\Vert \curl \bx_{\bu} \Vert_{0,\O}\\
&\quad+\nu_1\Vert \bx_{\bomega}\Vert_{0,\O}\Vert \curl \bz_{\bu} \Vert_{0,\O}+\kappa_1\Vert \curl \bz_{\bu}\Vert_{0,\O}\Vert \curl \bx_{\bu}\Vert_{0,\O} +\kappa_2\Vert \vdiv \bz_{\bu}\Vert_{0,\O}\Vert \vdiv \bx_{\bu}\Vert_{0,\O}\\
&\quad+\kappa_1 \Vert \bz_{\bomega}\Vert_{0,\O}\Vert \curl \bx_{\bu}\Vert_{0,\O}+2C_r d^{\frac{r-2}{2r}}\Vert \nabla \nu \Vert_{0,r^*,\O}\Vert \nabla \bz_{\bu}\Vert_{0,\O}\Vert \bx_{\bu}\Vert_{1,\O} \\
&\quad+ 2C_r d^{\frac{r-2}{2r}}\Vert \nabla \nu \Vert_{0,r^*,\O}\Vert  \bz_{\bomega}\Vert_{0,\O}\Vert \bx_{\bu}\Vert_{1,\O},
\end{align*} 
{and} arranging terms, we obtain
\begin{equation*}
\Vert (\bx_{\bu},\bx_{\bomega})\Vert_{{X}}^2 \leq {\overline{\alpha}}(\nu)^{-1}a((e_{\bu},e_{\bomega}),(\bx_{\bu},\bx_{\bomega})) +(m_1\Vert \bz_{\bomega}\Vert_{0,\O}+m_2|\bz_{\bu}|_{1,\Omega})\Vert (\bx_{\bu},\bx_{\bomega})\Vert_X,
\end{equation*}
where $m_1$ and $m_2$ are given by 
\begin{align*}
m_1&:={\overline{\alpha}}(\nu)^{-1}\max\{2\nu_1,\kappa_1,{2C_0C_r d^{\frac{r-2}{2r}}\Vert \nabla \nu \Vert_{0,r^*,\O}}\}, \\ m_2&:={\overline{\alpha}}(\nu)^{-1}\max\{\sigma_1 C_0^2,\nu_1,\kappa_1,\kappa_2,{2C_0C_r d^{\frac{r-2}{2r}}\Vert \nabla \nu \Vert_{0,r^*,\O}}\}.
\end{align*}

On the other hand, from \eqref{error-eq} we have
\begin{align*}
a((e_{\bu},e_{\bomega}),(\bx_{\bu},\bx_{\bomega}))&=-N(\bu;\bu, \bx_{\bu})+N(\bu_h;\bu_h, \bx_{\bu})-{b}((\bx_{\bu},\bx_{\bomega}),e_{p})\\
&=-N(\bu;\bz_{\bu}, \bx_{\bu})-N(\bu;\bx_{\bu}, \bx_{\bu})-N(\bz_{\bu};\bu_h, \bx_{\bu})\\
&\qquad -N(\bx_{\bu};\bu_h, \bx_{\bu})+( z_p,\vdiv \bx_{\bu})_{0,\O}+( x_p,\vdiv \bx_{\bu})_{0,\O}\\
&\leq (2C_0^2C_4^2d^{1/2} \delta| \bz_{\bu}|_{1,\O}+\Vert z_p\Vert_{0,\O})\Vert (\bx_{\bu},\bx_{\bomega})\Vert_X\\
& \qquad \qquad+2C_0^2C_4^2d^{1/2}\delta\Vert (\bx_{\bu},\bx_{\bomega})\Vert_X^2 -(x_p,\vdiv \bv_h)_{0,\O},
\end{align*}
where we have used \ric{Cauchy--Schwarz} inequality. 
From the above inequality, we obtain
\begin{align*}
\biggl( 1-\frac{2C_0^2C_4^2d^{1/2}\delta}{{\overline{\alpha}}(\nu) }\biggr)\Vert (\bx_{\bu},\bx_{\bomega})\Vert_{{X}} &\leq \left(m_2+\frac{2C_0^2C_4^2d^{1/2}\delta}{{\overline{\alpha}}(\nu) }\right)| \bz_{\bu}|_{1,\Omega}+m_1\Vert \bz_{\bomega}\Vert_{0,\O} 
\\
&\qquad+{\overline{\alpha}}(\nu) ^{-1}(\Vert z_p\Vert_{0,\O}-(x_p,\vdiv \bv_h)_{0,\O}),
\end{align*}
while by the hypothesis of our problem is easy to see that $ 1-\frac{2C_0^2C_4^2d^{1/2}\delta}{{\overline{\alpha}}(\nu) }>0.$
Therefore,
\begin{equation}\label{cotau}
\Vert (\bx_{\bu},\bx_{\bomega})\Vert_{ {X}} \leq M_2 \Vert \bz_{\bu}\Vert_{1,\Omega}+M_1\Vert \bz_{\bomega}\Vert_{0,\O} + M_3\Vert z_p\Vert_{0,\O}-M_3(x_p,\vdiv \bv_h)_{0,\O},
\end{equation}
where the constants are given by 
\begin{align*}
M_0&:=\left( 1-\frac{2C_0^2C_4^2d^{1/2}\delta}{{\overline{\alpha}}(\nu) }\right)^{-1}, \text{ } M_2:=M_0\left( m_2+\frac{2C_0^2C_4^2d^{1/2}\delta}{{\overline{\alpha}}(\nu) }\right),\\
M_1&:=m_1M_0,\text{ } M_3:={\overline{\alpha}}(\nu)^{-1}M_0
\end{align*}

Next, after introducing the following subspace of $V_h:$ 
$$Z_h:=\left \{ \bv_h\in V_h: \quad  (q_h,\vdiv \bv_h)_{0,\O}=0, \qquad \forall q_h\in Q_h \right \},$$
we can deduce from \eqref{cotau} that 
\begin{equation*}
\Vert (\bu_h-\boldsymbol{z}_h,\bx_{\bomega})\Vert_{{X}} \leq M_2 | \bu-\boldsymbol{z}_h|_{1,\Omega}+M_1\Vert \bz_{\bomega}\Vert_{0,\O} + M_3\Vert z_p\Vert_{0,\O} \qquad \forall  \boldsymbol{z}_h\in Z_h.
\end{equation*}

As $V_h$ and $Q_h$ are stable for the Stokes problem, then, using
\cite[Proposition 7.4.1]{Q}  we have that $V_h=Z_h\oplus (Z_h)^\bot$ and there exists $\gamma_1>0$ such that
\begin{equation}\label{gamo2}
\gamma_1 \Vert \boldsymbol{o}_h\Vert_{1,\O}\leq \sup_{q_h\in Q_h}\frac{(q_h, \vdiv \boldsymbol{o}_h)_{0,\O}}{\Vert q_h\Vert_{0,\O}} \qquad \forall  \boldsymbol{o}_h\in (Z_h)^\bot.
\end{equation}
From the above, we deduce that for $\bv_h$, there exist $\boldsymbol{z}_h\in Z_h$ and $\boldsymbol{o}_h\in (Z_h)^\bot$ such that $\bv_h=\boldsymbol{z}_h+\boldsymbol{o}_h.$                                                                                                                                                                                                                                                                                                                                                                                                                                                                                                                                                                                                                                                                                                                                                                                                                                                                                                                                                                                                                                                                                                                                                                                                                                                                                                                                                                                                                                                                                                                                                                                                                                                                                                                                                                                                                                                                                                                                                                                                                                                                                                           
By the triangle inequality, we find 
\begin{align*}
| \bu_h-\bv_h|_{1,\O} &\leq | \bu_h-\boldsymbol{z}_h|_{1,\O}+| \boldsymbol{o}_h|_{1,\O} \\
&\leq | \bu_h-\boldsymbol{z}_h|_{1,\O}+\gamma_1^{-1}\sup_{q_h\in Q_h}\frac{(q_h, \vdiv(-\bu+\boldsymbol{z}_h+ \boldsymbol{o}_h))_{0,\O}}{\Vert q_h\Vert_{0,\O}}\\
&\leq | \bu_h-\boldsymbol{z}_h|_{1,\O}+ \gamma_1^{-1} \Vert \vdiv(-\bu+\bv_h)\Vert_{0,\O}\\
&\leq | \bu_h-\boldsymbol{z}_h|_{1,\O}+ \gamma_1^{-1} | \bu-\bv_h|_{1,\O},
\end{align*}
and so 
$$
| \bx_{\bu}|_{1,\O}\leq | \bu_h-\boldsymbol{z}_h|_{1,\O}+ \gamma_1^{-1} | \bz_{\bu}|_{1,\O}.
$$
Then, after algebraic manipulations and using \eqref{gamo2}, we are left with 
\begin{align*}
\Vert (\bx_{\bu},\bx_{\bomega})\Vert_{{X}} &\leq | \bx_{\bu}|_{1,\O}+ \Vert \bx_{\bomega}\Vert_{0,\O}\leq | \bu_h-\boldsymbol{z}_h|_{1,\O}+ \gamma_1^{-1} | \bz_{\bu}|_{1,\O}+ \Vert \bx_{\bomega}\Vert_{0,\O}\\
&\leq \sqrt{2}(| \bu_h-\boldsymbol{z}_h|_{1,\O}^2+\Vert \bx_{\bomega}\Vert_{0,\O}^2)^{1/2}+\gamma_1^{-1} | \bz_{\bu}|_{1,\O}\\
&=\sqrt{2}\Vert (\bu_h-\boldsymbol{z}_h,\bx_{\bomega})\Vert_{{X}}+\gamma_1^{-1} | \bz_{\bu}|_{1,\O}\\
&\leq \sqrt{2}(M_2( | \bu-\boldsymbol{v}_h|_{1,\Omega}+| \boldsymbol{o}_h|_{1,\Omega})+M_1\Vert \bz_{\bomega}\Vert_{0,\O} + M_3\Vert z_p\Vert_{0,\O})+\gamma_1^{-1} | \bz_{\bu}|_{1,\O},\\
&\leq \sqrt{2}(M_2( | \bz_{\bu}|_{1,\Omega}+\gamma_1^{-1}| \bz_{\bu}|_{1,\Omega})+M_1\Vert \bz_{\bomega}\Vert_{0,\O} + M_3\Vert z_p\Vert_{0,\O})+\gamma_1^{-1} | \bz_{\bu}|_{1,\O},\\
& \leq  (\sqrt{2}M_2 +\gamma_1^{-1}(1+\sqrt{2} ))| \bz_{\bu}|_{1,\Omega}+\sqrt{2}M_1\Vert \bz_{\bomega}\Vert_{0,\O} +\sqrt{2} M_3\Vert z_p\Vert_{0,\O}.
\end{align*}

On the other hand, from \eqref{inf-sup-d} and \eqref{error-eq}, we can assert \ric{that
\begin{align*}
\Vert x_p\Vert_{0,\O} & \leq \gamma_0^{-1}\sup_{\bv_h\in V_h}\frac{b(\bv_h,x_p)}{|\bv_h|_{1,\Omega}}\\
&=\gamma_0^{-1}\sup_{\bv_h\in V_h}\frac{-a((e_{\bu},e_{\bomega}),(\bv_h,\btheta_h))-N(\bu;\bu, \bv_h)+N(\bu_h;\bu_h, \bv_h)-b(\bv_h,z_p)}{|\bv_h|_{1,\Omega}}\\
&\leq \gamma_0^{-1}(\Vert a\Vert +2C_0^2C_{4}^2d^{1/2}\delta)\Vert (e_{\bu},e_{\bomega})\Vert_{{X}}+\gamma_0^{-1}\Vert z_p\Vert_{0,\O}.
\end{align*}
Therefore}, we end up with the bounds 
\begin{align*}
 \Vert (e_{\bu},e_{\bomega},e_{p})\Vert_{{H}} 
& \leq \Vert (\bz_{\bu},\bz_{\bomega},z_p)\Vert_{{H}}+\Vert (\bx_{\bu},\bx_{\bomega},x_p)\Vert_{{H}} \leq \Vert (\bz_{\bu},\bz_{\bomega},z_p)\Vert_{{H}}+\Vert (\bx_{\bu},\bx_{\bomega})\Vert_{{X}}+\Vert x_p\Vert_{0,\O}\\
& \leq \Vert (\bz_{\bu},\bz_{\bomega},z_p)\Vert_{{H}}+\widehat{M}_2 | \bz_{\bu}|_{1,\Omega}+\widehat{M}_1\Vert \bz_{\bomega}\Vert_{0,\O} + \widehat{M}_3\Vert z_p\Vert_{0,\O}\\
&\quad +\gamma_0^{-1}(\Vert a\Vert +2C_0^2C_{4}^2d^{1/2}\delta)\Vert (e_{\bu},e_{\bomega})\Vert_{{X}}+\gamma_0^{-1}\Vert z_p\Vert_{0,\O}\\
&\leq \Vert (\bz_{\bu},\bz_{\bomega},z_p)\Vert_{{H}}+\widehat{M}_2 | \bz_{\bu}|_{1,\Omega}+\widehat{M}_1\Vert \bz_{\bomega}\Vert_{0,\O} + \widehat{M}_3\Vert z_p\Vert_{0,\O}+\gamma_0^{-1}\Vert z_p\Vert_{0,\O}\\
&\quad+\widehat{M}_4\Vert (\bz_{\bu},\bz_{\bomega})\Vert_{{X}}+\widehat{M}_4 \widehat{M}_2 | \bz_{\bu}|_{1,\Omega}+\widehat{M}_4 \widehat{M}_1\Vert \bz_{\bomega}\Vert_{0,\O} +\widehat{M}_4 \widehat{M}_3\Vert z_p\Vert_{0,\O}.
\end{align*}
And consequently, we obtain the estimate 
\begin{equation*}
\Vert (e_{\bu},e_{\bomega},e_{p})\Vert_{{H}}\leq (1+\widehat{M}_4)(1+\widehat{M}_1+\widehat{M}_2)\Vert (\bz_{\bu},\bz_{\bomega})\Vert_{{X}}+\widehat{M}_5\Vert z_p\Vert_{0,\O},
\end{equation*}
where the involved constants are 
\begin{gather*}
\widehat{M}_1=\sqrt{2}M_1, \quad \widehat{M}_2=\sqrt{2}M_2+\gamma_1^{-1}(1+\sqrt{2}), \quad \widehat{M}_3=\sqrt{2}M_3,\\
 \widehat{M}_4=\gamma_0^{-1}(\Vert a\Vert +2C_{4}^2d^{1/2}\delta), \quad \widehat{M}_5=1+\gamma_0^{-1}+(1+\widehat{M}_4)\widehat{M}_3.
 \end{gather*} 
\end{proof}

\ruben{
For the case $\kappa_1=0$ the results reads as follows.
\begin{theorem}\label{Cea1} 
Consider the  assumptions of Lemma~\ref{punto_fijo2} and 
choose $0<\delta<\frac{{\overline{\alpha}(\nu)} }{2C_0^2C_4^2 d^{1/2}}$.
If 
$
\Vert \ff \Vert_{0,\Omega} <\frac{1}{2}{\overline{\alpha}(\nu)}  \delta,
$
then there exists a positive constant ${C}$ independent of $h$ such that  
\[
\Vert (\bu-\bu_h,\bomega-\bomega_h)\Vert_{X} + \| p-p_h\|_{0,\Omega} \leq   {C}\displaystyle\inf_{(\bv_h,\btheta_h,q_h)\in V_h\times W_h\times Q_h}  \{\ric{\Vert (\bu-\bv_h,\bomega-\btheta_h)\Vert_{X}}
+\Vert p-q_h\Vert_{0,\O} \}{.}
\]
\end{theorem}
\begin{proof}
Let $(\bv_h,\btheta_h,q_h)\in V_h\times W_h\times Q_h$. From the definition of $a$ and grouping terms, we have 
\begin{align*}
a((\bx_{\bu},\bx_{\bomega}),(\bx_{\bu},\bx_{\bomega}))&=a((e_{\bu},e_{\bomega}),(\bx_{\bu},\bx_{\bomega}))-(\sigma \bz_{\bu},\bx_{\bu})_{0,\O}-(\nu \bz_{\bomega}, \bx_{\bomega})_{0,\O}\\
&\quad -(\nu \bz_{\bomega}, \curl \bx_{\bu})_{0,\O}+(\nu \bx_{\bomega}, \curl \bz_{\bu})_{0,\O}-\kappa_2(\vdiv \bz_{\bu},\vdiv \bx_{\bu})_{0,\O}\\
& \qquad \quad+2(\beps(\bz_{\bu})\nabla \nu, \bx_{\bu})_{0,\O}-(\bz_{\bomega},\nabla \nu \times \bx_{\bu})_{0,\O},
\end{align*}
and from \eqref{error-eq} we have
$$
a((\bx_{\bu},\bx_{\bomega}),(\boldsymbol{0},\btheta_{h}))=-a((\bz_{\bu},\bz_{\bomega}),(\boldsymbol{0},\btheta_{h}))=-(\nu \bz_{\bomega},\btheta_h)_{0,\Omega}+(\nu \curl \bz_{\bu},\btheta_h)_{0,\Omega}, \text{ for all } \btheta_h \in W_h.
$$
On the other hand 
\begin{align*}
a((\bx_{\bu},\bx_{\bomega}),(\bx_{\bu},\bx_{\bomega}-c\curl \bx_{\bu}))&=a((\bx_{\bu},\bx_{\bomega}),(\bx_{\bu},\bx_{\bomega}))+a((\bx_{\bu},\bx_{\bomega}),(\boldsymbol{0},-c\curl \bx_{\bu}))\\
&=a((\bx_{\bu},\bx_{\bomega}),(\bx_{\bu},\bx_{\bomega}))-a((\bz_{\bu},\bz_{\bomega}),(\boldsymbol{0},-c\curl \bx_{\bu})).
\end{align*}
Then taking $\varepsilon_2 \in (0,2)$ and $c=\dfrac{\nu_0^2}{\nu_1^2}\left( 1-\dfrac{\varepsilon_2}{2}\right)$ in \eqref{bounds00}, we have that
\begin{align*}
{\overline{\alpha}}(\nu) \Vert (\bx_{\bu},\bx_{\bomega})\Vert_{{X}}^2 &\leq a((e_{\bu},e_{\bomega}),(\bx_{\bu},\bx_{\bomega}))+\sigma_1 C_0^2 | \bz_{\bu} |_{1,\O}| \bx_{\bu}|_{1,\O}\\
& \qquad +\nu_1\Vert \bz_{\bomega}\Vert_{0,\O} \Vert \bx_{\bomega}\Vert_{0,\O}   +\nu_1\Vert \bz_{\bomega}\Vert_{0,\O}\Vert \curl \bx_{\bu} \Vert_{0,\O}\\
&\leq a((e_{\bu},e_{\bomega}),(\bx_{\bu},\bx_{\bomega}))+ K(\sigma,\nu,\kappa)\Vert (\bz_{\bu},\bz_{\bomega})\Vert_{{X}} \Vert (\bx_{\bu},\bx_{\bomega})\Vert_{{X}},
\end{align*}
where $K(\sigma,\nu,\kappa)=K_0(\sigma,\nu,\kappa)K_1(\sigma,\nu,\kappa)$ with
\begin{align*}
K_0(\sigma,\nu,\kappa)&=2^{1/2} \max\left\{\sigma_1C_0^2+\nu_1(2+c)+2C_r C_0 d^{\frac{r-2}{2r}} \Vert \nabla \nu \Vert_{0,r^{*},\Omega},\kappa_2\right\}^{1/2}, \\
K_1(\sigma,\nu,\kappa)&=2^{1/2}\max\left\{\sigma_1C_0^2+\nu_1(1+2c)+4C_r C_0 d^{\frac{r-2}{2r}} \Vert \nabla \nu \Vert_{0,r^{*},\Omega},\kappa_2\right\}^{1/2}.
\end{align*}
On the other hand, from \eqref{error-eq} we have
\begin{align*}
a((e_{\bu},e_{\bomega}),(\bx_{\bu},\bx_{\bomega}))&=-N(\bu;\bu, \bx_{\bu})+N(\bu_h;\bu_h, \bx_{\bu})-{b}((\bx_{\bu},\bx_{\bomega}),e_{p})\\
&=-N(\bu;\bz_{\bu}, \bx_{\bu})-N(\bu;\bx_{\bu}, \bx_{\bu})-N(\bz_{\bu};\bu_h, \bx_{\bu})\\
&\qquad \qquad -N(\bx_{\bu};\bu_h, \bx_{\bu})+( z_p,\vdiv \bx_{\bu})_{0,\O}+( x_p,\vdiv \bx_{\bu})_{0,\O}\\
&\leq (2C_0^2C_4^2d^{1/2} \delta | \bz_{\bu}|_{1,\O}+\Vert z_p\Vert_{0,\O})\Vert (\bx_{\bu},\bx_{\bomega})\Vert_X\\
& \qquad+2C_0^2C_4^2d^{1/2}\delta\Vert (\bx_{\bu},\bx_{\bomega})\Vert_X^2 -(x_p,\vdiv \bv_h)_{0,\O},
\end{align*}
where we have used Cauchy–Schwarz inequality.

From the above inequality, we obtain
\begin{align*}
\biggl( 1-\frac{2C_0^2C_4^2d^{1/2}\delta}{{\overline{\alpha}(\nu)} }\biggr)\Vert (\bx_{\bu},\bx_{\bomega})\Vert_{{X}}^2 & \leq \overline{\alpha}(\nu)^{-1}(K(\sigma,\nu,\kappa_2)+2C_0^2C_4^2d^{1/2})\Vert (\bz_{\bu},\bz_{\bomega})\Vert_X\Vert (\bx_{\bu},\bx_{\bomega})\Vert_X\\
&\quad +{\overline{\alpha}}(\nu)^{-1}\Vert z_p\Vert_{0,\O}\Vert (\bx_{\bu},\bx_{\bomega})\Vert_X-{\overline{\alpha}}(\nu)^{-1}(x_p,\vdiv \bv_h)_{0,\O},
\end{align*}
while, from de assumtion on $\delta$, it is easy to see that $ 1-\frac{2C_0^2C_4^2d^{1/2}\delta}{{\overline{\alpha}(\nu)} }>0.$ Therefore,
\begin{equation}\label{cotau}
\Vert (\bx_{\bu},\bx_{\bomega})\Vert_{ {X}}^2 \leq M_1(\sigma,\nu,\kappa_2) \Vert (\bz_{\bu},\bz_{\bomega})\Vert_X\Vert (\bx_{\bu},\bx_{\bomega})\Vert_X+M_2(\nu)\Vert z_p\Vert_{0,\O}\Vert (\bx_{\bu},\bx_{\bomega})\Vert_X-M_2(\nu)(x_p,\vdiv \bv_h)_{0,\O},
\end{equation}
where the constants are given by 
$$M_1(\sigma,\nu,\kappa_2)=\dfrac{1}{\overline{\alpha}(\nu)}\left( 1-\frac{2C_0^2C_4^2d^{1/2}\delta}{{\overline{\alpha}(\nu)} }\right)^{-1}(K(\sigma,\nu,\kappa_2)+2C_0^2C_4^2d^{1/2}), \text{ } M_2(\nu)=\dfrac{1}{\overline{\alpha}(\nu)}\left( 1-\frac{2C_0^2C_4^2d^{1/2}\delta}{{\overline{\alpha}} }\right)^{-1}.$$
we can deduce from \eqref{cotau} that 
\begin{equation*}
\Vert (\bu_h-\boldsymbol{z}_h,\bx_{\bomega})\Vert_{{X}} \leq M_1(\sigma,\nu,\kappa_2)\Vert (\bu-\boldsymbol{z}_h,\bz_{\bomega})\Vert_X\Vert (\bx_{\bu},\bx_{\bomega})\Vert_X+M_2(\nu)\Vert z_p\Vert_{0,\O}\Vert (\bx_{\bu},\bx_{\bomega})\Vert_X \qquad \forall  \boldsymbol{z}_h\in Z_h.
\end{equation*}

Then, after algebraic manipulations and using \eqref{gamo2}, {we have}
\begin{align*}
\Vert (\bx_{\bu},\bx_{\bomega})\Vert_{{X}} &\leq | \bx_{\bu}|_{1,\O}+ \Vert \bx_{\bomega}\Vert_{0,\O}\leq | \bu_h-\boldsymbol{z}_h|_{1,\O}+ \gamma_1^{-1} | \bz_{\bu}|_{1,\O}+ \Vert \bx_{\bomega}\Vert_{0,\O}\\
&\leq \sqrt{2}(| \bu_h-\boldsymbol{z}_h|_{1,\O}^2+\Vert \bx_{\bomega}\Vert_{0,\O}^2)^{1/2}+\gamma_1^{-1} | \bz_{\bu}|_{1,\O}=\sqrt{2}\Vert (\bu_h-\boldsymbol{z}_h,\bx_{\bomega})\Vert_{{X}}+\gamma_1^{-1} | \bz_{\bu}|_{1,\O}\\
&\leq \sqrt{2}(M_1(\sigma,\nu,\kappa_2)( | \bu-\boldsymbol{v}_h|_{1,\Omega}+| \boldsymbol{o}_h|_{1,\Omega})+M_1(\sigma,\nu,\kappa_2)\Vert \bz_{\bomega}\Vert_{0,\O} + M_2(\nu)\Vert z_p\Vert_{0,\O})+\gamma_1^{-1} | \bz_{\bu}|_{1,\O}\\
& \leq  (\sqrt{2}M_1(\sigma,\nu,\kappa_2)+\gamma_1^{-1}(1+\sqrt{2} ))| \bz_{\bu}|_{1,\Omega}+\sqrt{2}M_1(\sigma,\nu,\kappa_2)\Vert \bz_{\bomega}\Vert_{0,\O} +\sqrt{2} M_2(\nu)\Vert z_p\Vert_{0,\O}.
\end{align*}
On the other hand, from \eqref{inf-sup-d} and \eqref{error-eq}, we can assert that 
\begin{align*}
\Vert x_p\Vert_{0,\O} & \leq \gamma_0^{-1}\sup_{\bv_h\in V_h}\frac{b(\bv_h,x_p)}{|\bv_h|_{1,\Omega}}\\
&=\gamma_0^{-1}\sup_{\bv_h\in V_h}\frac{-a((e_{\bu},e_{\bomega}),(\bv_h,\btheta_h))-N(\bu;\bu, \bv_h)+N(\bu_h;\bu_h, \bv_h)-b(\bv_h,z_p)}{|\bv_h|_{1,\Omega}}\\
&\leq \gamma_0^{-1}(\Vert a\Vert +2C_0^2C_{4}^2d^{1/2}\delta)\Vert (e_{\bu},e_{\bomega})\Vert_{{X}}+\gamma_0^{-1}\Vert z_p\Vert_{0,\O}.
\end{align*}
Finally we get the estimates
\begin{align*}
\Vert (e_{\bu},e_{\bomega})\Vert_{X}&\leq (1+\sqrt{2}M_1(\sigma,\nu,\kappa_2) +\gamma_1^{-1}(1+\sqrt{2} ))| \bz_{\bu}|_{1,\Omega}+(1+\sqrt{2}M_1(\sigma,\nu,\kappa_2))\Vert \bz_{\bomega}\Vert_{0,\O} +\sqrt{2} M_2(\nu)\Vert z_p\Vert_{0,\O},\\
\Vert x_p\Vert_{0,\O} &\leq \gamma_0^{-1}(\Vert a\Vert +2C_0^2C_{4}^2d^{1/2}\delta)\Vert (e_{\bu},e_{\bomega})\Vert_{{X}}+\gamma_0^{-1}\Vert z_p\Vert_{0,\O}.
\end{align*}
Then the constant in the C\'ea estimate depends also on the augmentation parameter $\kappa_2$ in the following manner
$$
\Vert (e_{\bu},e_{\bomega})\Vert_{X}+\Vert e_p\Vert_{0,\O}\leq C(\sigma,\nu,\kappa_2)(\Vert (\bz_{\bu},\bz_{\bomega})\Vert_{X}+\Vert \bz_p\Vert_{0,\O}),
$$
where
\begin{align*}
C(\sigma,\nu,\kappa_2)&=\max\{D_1(1+D_0),D_2(1+D_0),1+\sqrt{2}M_2(1+D_0)+\gamma_0^{-1}\}, \text{ }D_0=\gamma_0^{-1}(\Vert a\Vert +2C_0^2C_{4}^2d^{1/2}\delta),\\
D_1&=1+\sqrt{2}M_1 +\gamma_1^{-1}(1+\sqrt{2} ),\text{ }D_2=1+\sqrt{2}M_1.
\end{align*}
\end{proof} 
}

\subsection{Discrete subspaces and error estimates}\label{sec:FE}
In this section we give examples of finite element spaces for $V_h,Q_h,W_h$ 
and derive the corresponding rate of convergence
for each finite element family.

\noindent\textbf{Generalised Taylor--Hood--$\mathbb{P}_{k}$.} 
We begin with  Taylor--Hood finite
elements \cite{HT} to approximate  velocity and pressure, and we will contemplate continuous or discontinuous piecewise polynomial spaces for vorticity. 
More precisely, for any $k\ge1$, we consider
\begin{equation}\label{set1}
\begin{split}
V_h:&=\{\bv_h \in C(\overline{\Omega})^d:\bv_h\vert_{K}\in \mathbb{P}_{k+1}(K)^d \quad \forall K \in \mathcal{T}_h\} \cap \HCUO^d,\\
Q_h:&=\{ q_h \in C(\overline{\Omega}): q_h\vert_{K}\in \mathbb{P}_{k}(K) \quad \forall K \in \mathcal{T}_h\} \cap \LOO,\\
W_h^1:&=\{\btheta_h \in C(\overline{\Omega})^{d(d-1)/2}: \btheta_h\vert_{K}\in \mathbb{P}_{k}(K)^{d(d-1)/2} \quad \forall K \in \mathcal{T}_h \},\\
W_h^2:&=\{\btheta_h \in \LO^{d(d-1)/2}: \btheta_h\vert_{K}\in\mathbb{P}_{k}(K)^{d(d-1)/2} \quad \forall K \in \mathcal{T}_h \}.
\end{split}
\end{equation}

It is well known that $(V_h,Q_h)$ satisfies the inf-sup condition
\eqref{inf-sup-d} (see \cite{gr-1986}). Let us recall   approximation properties of the
finite element subspaces  \eqref{set1}, \ric{obtained using the classical Lagrange interpolator}.  
Assume that $\bu \in H^{1+s}(\O)^d$, $p \in H^{s}(\O)$
and $\bomega \in H^{s}(\O)^{d(d-1)/2}$, for some
$s\in(1/2,k+1]$. Then there exists $C>0$,
independent of $h$, such that
\begin{subequations}
\begin{align}
\inf_{\bv_h \in V_h}\Vert\bu -\bv_h\Vert_{1,\O} &\leq C h^{s}\Vert \bu\Vert_{1+s,\O},\label{Ap1}\qquad 
\inf_{q_h \in Q_h}\Vert p -q_h \Vert_{0,\O} \leq C h^{s}\Vert p \Vert_{s,\O},
\\
\inf_{\btheta_h \in W_h^1}\Vert \bomega -\btheta_h \Vert_{0,\O}
&\leq C h^{s}\Vert \bomega\Vert_{s,\O},\qquad 
\inf_{\btheta_h \in W_h^2}\Vert \bomega -\btheta_h \Vert_{0,\O}
\leq C h^{s}\Vert \bomega\Vert_{s,\O}.\label{Ap33}
\end{align}
\end{subequations}

The following theorem provides the rate of convergence expected when using 
  \eqref{probform2d}. The proof follows directly from Theorem \ref{Cea} in combination with  \eqref{Ap1}-\eqref{Ap33}.
\begin{theorem}\label{teoTaylorHood}
Let $k\ge1$ be a integer and let $V_h,Q_h$ and $W^i_h$, $i=1,2$
be given by \eqref{set1}.
Let $(\bu,\bomega,p)\in\HCUO^d\times\LO^{d(d-1)/2}\times\LOO$ and
$(\bu_h,\bomega_h,p_h)\in V_h\times W_h^i\times Q_h$ be the unique
solutions to the continuous and discrete problems \eqref{probform2} and
\eqref{probform2d}, respectively.  Assume that
$\bu\in\HusO^d$, $\bomega\in\HsO^{d(d-1)/2}$ and $p\in\HsO$, for some
$s\in(1/2,k+1]$. Then, there exists $\hat{C}>0$, independent of $h$, such
  that
\[
  \Vert(\bu,\bomega)-(\bu_h,\bomega_h)\Vert_{{X}}
+\Vert p-p_h\Vert_{0,\O}  \leq\hat{C}h^s(\Vert\bu\Vert_{1+s,\O}
+\|\bomega\|_{s,\O}+\Vert p\Vert_{s,\O}).\]
\end{theorem}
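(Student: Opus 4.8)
The plan is to combine the abstract C\'ea-type bound of Theorem~\ref{Cea} with the approximation properties of the chosen finite element subspaces. Since Theorem~\ref{Cea} already gives
\[
\Vert (\bu-\bu_h,\bomega-\bomega_h)\Vert_{X} + \Vert p-p_h\Vert_{0,\Omega} \leq C \inf_{(\bv_h,\btheta_h,q_h)\in V_h\times W_h^i\times Q_h} \bigl\{ \nnorm{\bu-\bv_h}_{1,\O}+\Vert \bomega-\btheta_h\Vert_{0,\O}+\Vert p-q_h\Vert_{0,\O} \bigr\},
\]
with $C$ independent of $h$, the essential content is reduced to bounding the infimum. First I would verify that the hypotheses of Theorem~\ref{Cea} are met: the smallness conditions \eqref{eq:yt7}--\eqref{eq:yt8} on $\nabla\nu$ and $\ff$, together with the constraint $0<\delta<\overline{\alpha}/(2C_4^2 d^{1/2})$, are assumed globally, and the pair $(V_h,Q_h)$ from \eqref{set1} is Stokes-inf-sup stable (Taylor--Hood, see \cite{gr-1986}), so \eqref{inf-sup-d} holds with $\gamma_0$ independent of $h$; thus Theorem~\ref{Cea} applies verbatim.

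Next I would bound each term in the infimum by choosing $\bv_h$, $\btheta_h$, $q_h$ to be the respective best approximants (or suitable interpolants). Using the regularity assumptions $\bu\in\HusO^d$, $\bomega\in\HsO^{d(d-1)/2}$, $p\in\HsO$ with $s\in(1/2,k+1]$ and the approximation estimates \eqref{Ap1}--\eqref{Ap33}, one gets
\[
\inf_{\bv_h\in V_h}\Vert\bu-\bv_h\Vert_{1,\O}\leq C h^{s}\Vert\bu\Vert_{1+s,\O},\quad
\inf_{\btheta_h\in W_h^i}\Vert\bomega-\btheta_h\Vert_{0,\O}\leq C h^{s}\Vert\bomega\Vert_{s,\O},\quad
\inf_{q_h\in Q_h}\Vert p-q_h\Vert_{0,\O}\leq C h^{s}\Vert p\Vert_{s,\O},
\]
for $i=1,2$. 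One subtle point is that \eqref{Ap1} controls the full $\H^1$ norm of $\bu-\bv_h$, whereas the infimum in Theorem~\ref{Cea} involves $\nnorm{\bu-\bv_h}_{1,\O}$; here I would invoke the norm equivalence $\nnorm{\bv}_{1,\O}\le \Vert\bv\Vert_{1,\O}$ on $\HCUO^d$ recorded in the variational-formulation section (from $\Vert\nabla\bv\Vert_{0,\O}^2=\Vert\curl\bv\Vert_{0,\O}^2+\Vert\vdiv\bv\Vert_{0,\O}^2$), which makes the two quantities comparable with a constant independent of $h$. Substituting these three bounds into the C\'ea estimate and collecting the $h^{s}$ factors yields the claimed inequality with $\hat C$ a product of $C$ from Theorem~\ref{Cea} and the approximation constants, hence independent of $h$.

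The proof is essentially a routine assembly, so there is no real obstacle; the only place demanding minor care is ensuring that the interpolant chosen for $\bu$ actually lands in $V_h=\{\,\cdot\,\}\cap\HCUO^d$ (i.e.\ it respects the no-slip boundary condition and the $\H^1_0$ conformity), which is standard for the Taylor--Hood Lagrange interpolant when $\bu\in\HusO^d\cap\HCUO^d$ with $s>1/2$, and that the constants in \eqref{Ap1}--\eqref{Ap33} are genuinely $h$-independent under the shape-regularity assumption on $\{\mathcal{T}_h(\O)\}_{h>0}$. With these observations the statement follows directly, as asserted in the text preceding the theorem.
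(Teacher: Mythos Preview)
Your proposal is correct and follows exactly the approach sketched in the paper, which states that the result ``follows directly from Theorem~\ref{Cea} in combination with \eqref{Ap1}--\eqref{Ap33}.'' The additional remarks you make about the norm equivalence $\nnorm{\cdot}_{1,\O}\le\Vert\cdot\Vert_{1,\O}$ and about the interpolant respecting the homogeneous boundary condition are valid clarifications that the paper leaves implicit.
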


\noindent\textbf{MINI-element--$\mathbb{P}_{k}$.}
Then we consider 
the so-called MINI-element for velocity and
pressure \cite{arnold84}, and continuous or discontinuous
piecewise polynomials for vorticity 
(see  \cite[Sections 8.6 and 8.7]{bbf-2013} for further details):
\begin{align*}
H_h:&=\{\bv_h \in C(\overline{\Omega})^d:\bv_h\vert_{K}\in \mathbb{P}_{k}(K)^d \quad \forall K \in \mathcal{T}_h\}, \quad    U_h :=H_h^d, \\
\mathbb{B}(b_{K} \nabla H_h):&=\{\bv_{hb} \in H^1(\O)^d:\bv_{hb}\vert_{K}=b_K \nabla (q_h)\vert_{K} \, \text{for some } \, q_h\in H_h\},
\end{align*}
where $b_{K}$ is the standard (cubic or quartic) bubble function
$\lambda_1\cdots\lambda_{d+1}\in\mathbb{P}_{d+1}(K)$,
and let us define the following finite element subspaces:
\begin{align}\label{set2}
Q_h:&=\{ q_h \in C(\overline{\Omega}): q_h\vert_{K}\in \mathbb{P}_{k}(K)
\quad \forall K \in \mathcal{T}_h\} \cap \LOO,\quad 
V_h:=U_h \oplus \mathbb{B}(b_{K} \nabla Q_h)  \cap \HCUO^d,\nonumber\\
W_h^1:&=\{\btheta_h \in C(\overline{\Omega})^{d(d-1)/2}:
\btheta_h\vert_{K}\in \mathbb{P}_{k}(K)^{d(d-1)/2} \quad \forall K \in \mathcal{T}_h \},\\
W_h^2:&=\{\btheta_h \in \LO^{d(d-1)/2}:\btheta_h\vert_{K}\in\mathbb{P}_{k}(K)^{d(d-1)/2} \quad \forall K \in \mathcal{T}_h \}.\nonumber
\end{align}
The rate of convergence  considering the above discrete
spaces \eqref{set2} is obtained similarly as before, from the C\'ea estimate and the approximation properties. 
\begin{theorem}\label{teoMINI}
Let $k\ge1$ be a integer and let $V_h,Q_h$ and $W^i_h$, $i=1,2$
be given by \eqref{set2}.
Let $(\bu,\bomega,p)\in\HCUO^d\times\LO^{d(d-1)/2}\times\LOO$ and
$(\bu_h,\bomega_h,p_h)\in V_h\times W_h^i\times Q_h$ be the unique
solutions to the continuous and discrete problems \eqref{probform2} and
\eqref{probform2d}, respectively.  Assume that
$\bu\in\HusO^d$, $\bomega\in\HsO^{d(d-1)/2}$ and $p\in\HsO$, for some
$s\in(1/2,k]$. Then, there exists $\hat{C}>0$, independent of $h$, such
  that  
\begin{align*}
  \Vert(\bu,\bomega)-(\bu_h,\bomega_h)\Vert_{{X}}
+\Vert p-p_h\Vert_{0,\O}  \leq\hat{C}h^s(\Vert\bu\Vert_{1+s,\O}
+\|\bomega\|_{s,\O}+\Vert p\Vert_{s,\O}).\end{align*}
\end{theorem}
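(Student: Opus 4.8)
The plan is to reduce Theorem~\ref{teoMINI} to a direct application of the C\'ea estimate from Theorem~\ref{Cea} together with the approximation properties of the discrete spaces \eqref{set2}, exactly as was done for the generalised Taylor--Hood family in Theorem~\ref{teoTaylorHood}. First I would verify that the hypotheses of Theorem~\ref{Cea} are in force: the MINI-element pair $(V_h,Q_h)$ defined in \eqref{set2} is inf-sup stable for the Stokes problem (see \cite{arnold84} or \cite[Sections 8.6 and 8.7]{bbf-2013}), so the discrete inf-sup condition \eqref{inf-sup-d} holds with a constant $\gamma_0>0$ independent of $h$; the remaining assumptions (the bounds \eqref{eq:yt7} on the data/viscosity and the smallness of $\Vert\ff\Vert_{0,\Omega}$ relative to $\overline\alpha\delta$, with $0<\delta<\overline\alpha/(2C_4^2 d^{1/2})$) are part of the standing hypotheses inherited from Lemma~\ref{punto_fijo} and are assumed here as well. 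Consequently both \eqref{probform2} and \eqref{probform2d} are uniquely solvable and the C\'ea bound
\[
\Vert(\bu-\bu_h,\bomega-\bomega_h)\Vert_X+\Vert p-p_h\Vert_{0,\O}\le C\inf_{(\bv_h,\btheta_h,q_h)}\{\nnorm{\bu-\bv_h}_{1,\O}+\Vert\bomega-\btheta_h\Vert_{0,\O}+\Vert p-q_h\Vert_{0,\O}\}
\]
is available with $C$ independent of $h$.

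Next I would insert the approximation estimates for the MINI-element spaces. Since $U_h$ contains the continuous piecewise-$\mathbb{P}_k$ vector fields, the Scott--Zhang (or Cl\'ement) interpolant gives $\inf_{\bv_h\in V_h}\Vert\bu-\bv_h\Vert_{1,\O}\le C h^s\Vert\bu\Vert_{1+s,\O}$ for $\bu\in\HusO^d$ with $s\in(1/2,k]$; note the range of $s$ is $(1/2,k]$ rather than $(1/2,k+1]$ because the MINI velocity space only reproduces polynomials of degree $k$, which is the sole structural difference from the Taylor--Hood case. The continuous $\mathbb{P}_k$ pressure space $Q_h$ gives $\inf_{q_h\in Q_h}\Vert p-q_h\Vert_{0,\O}\le C h^s\Vert p\Vert_{s,\O}$, and both the continuous space $W_h^1$ and the discontinuous space $W_h^2$ of piecewise-$\mathbb{P}_k$ fields yield $\inf_{\btheta_h\in W_h^i}\Vert\bomega-\btheta_h\Vert_{0,\O}\le C h^s\Vert\bomega\Vert_{s,\O}$ for $s\in(1/2,k]$, $i=1,2$. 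One should note that for the MINI velocity space the divergence-free subspace argument in the proof of Theorem~\ref{Cea} still applies because $(V_h,Q_h)$ being Stokes-stable is exactly what is used there (via \cite[Proposition 7.4.1]{Q}), and the bubble enrichment does not affect the approximation order claimed above.

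Substituting these four bounds into the C\'ea estimate and choosing $\bv_h,\btheta_h,q_h$ as the respective quasi-interpolants, the infimum on the right-hand side is controlled by $C h^s(\Vert\bu\Vert_{1+s,\O}+\Vert\bomega\Vert_{s,\O}+\Vert p\Vert_{s,\O})$, which yields the stated estimate with $\hat C$ independent of $h$. I do not anticipate a genuine obstacle here: the only point requiring care is the bookkeeping on the admissible Sobolev range, $s\in(1/2,k]$ for MINI versus $s\in(1/2,k+1]$ for Taylor--Hood, which reflects the lower-order velocity approximation of the MINI element; everything else is an immediate transcription of the Taylor--Hood argument. If anything, the mildly delicate step is confirming that the enriched space $V_h=U_h\oplus\mathbb{B}(b_K\nabla Q_h)$ is indeed a conforming subspace of $\HCUO^d$ and Stokes-stable with an $h$-independent constant, but this is classical and cited.
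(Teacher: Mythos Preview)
Your proposal is correct and mirrors the paper's own approach: the paper does not give a detailed proof of Theorem~\ref{teoMINI} but simply states that the convergence rate is obtained ``similarly as before, from the C\'ea estimate and the approximation properties,'' i.e., exactly by combining Theorem~\ref{Cea} with the known inf-sup stability and interpolation estimates for the MINI-element spaces. Your observation about the restricted range $s\in(1/2,k]$ (versus $(1/2,k+1]$ for Taylor--Hood) is also the key bookkeeping point, and the rest is indeed a straightforward transcription.
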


\noindent\textbf{Bernardi--Raugel--$\mathbb{P}_{\ric{1}}$.}
Finally we specify a family of finite elements   based on
the   Bernardi--Raugel element for velocity and
pressure \cite{BR85},  and continuous or discontinuous
piecewise linear polynomials for vorticity. 
Let us introduce the following local space of order \ric{1}
$$\mathbb{P}_{1\mathbf{n}}(K)^d:=\P_1(K)^d\oplus\textrm{span}\{\mathbf{w}_1,
\mathbf{w}_2,\mathbf{w}_3,\mathbf{w}_4\},$$
with the vector-valued functions
$\mathbf{w}_i:=\lambda_j\lambda_k\lambda_l\mathbf{n}_i\in\mathbb{P}_{3}(K)^d$,
$j,k,l\ne i$, $j,k\ne l$, $j\ne k$, with $\mathbf{n}_i$
the outer normal to the face $i$. Using the subspaces
\begin{align}\label{set3}
\nonumber V_h:&=\{\bv_h \in C(\overline{\Omega})^d:\bv_h\vert_K\in\mathbb{P}_{1\mathbf{n}}(K)^d \quad \forall K \in \mathcal{T}_h\} \cap \HCUO^d,\\
\nonumber Q_h:&=\{ q_h \in \LO: q_h\vert_{K}\in \mathbb{P}_{0}(K)
\quad \forall K \in \mathcal{T}_h\} \cap \LOO,\\
\nonumber W_h^1:&=\{\btheta_h \in C(\overline{\Omega})^{d(d-1)/2}:
\btheta_h\vert_{K}\in \mathbb{P}_{1}(K)^{d(d-1)/2} \quad \forall K \in \mathcal{T}_h \},\\
W_h^2:&=\{\btheta_h \in \LO^{d(d-1)/2}:\btheta_h\vert_{K}\in\mathbb{P}_{\ric{1}}(K)^{d(d-1)/2} \quad \forall K \in \mathcal{T}_h \},
\end{align}
the rate of convergence of the augmented mixed
finite element scheme   is as follows: 
\begin{theorem}\label{teobernardi}
Let $V_h,Q_h$ and $W^i_h$, $i=1,2$
be given by \eqref{set3}.
Let $(\bu,\bomega,p)\in\HCUO^d\times\LO^{d(d-1)/2}\times\LOO$ and
$(\bu_h,\bomega_h,p_h)\in V_h\times W_h^i\times Q_h$ be the unique
solutions to the continuous and discrete problems \eqref{probform2} and
\eqref{probform2d}, respectively.  Assume that
$\bu\in\HusO^d$, $\bomega\in\HsO^{d(d-1)/2}$ and $p\in\HsO$, for some
$s\in(1/2,1]$. Then, there exists $\hat{C}>0$, independent of $h$, such
  that  
\begin{align*}
&\Vert(\bu,\bomega)-(\bu_h,\bomega_h)\Vert_{{X}}
+\Vert p-p_h\Vert_{0,\O}  \leq\hat{C}h^s(\Vert\bu\Vert_{1+s,\Omega}
+\|\bomega\|_{s,\Omega}+\Vert p\Vert_{s,\Omega}).
\end{align*}

\end{theorem}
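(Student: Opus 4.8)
The plan is to obtain the estimate as a direct consequence of the C\'ea bound in Theorem~\ref{Cea}, combined with standard approximation properties of the spaces in \eqref{set3}. First I would check that the hypotheses of Theorem~\ref{Cea} are indeed met by this particular choice of subspaces: the Bernardi--Raugel velocity element paired with piecewise constant pressures is inf-sup stable for the Stokes problem (see \cite{BR85,bbf-2013}), so the discrete inf-sup condition \eqref{inf-sup-d} holds with $\gamma_0>0$ independent of $h$; by construction $V_h\subseteq\HCUO^d$, $W_h^i\subseteq\LO^{d(d-1)/2}$ and $Q_h\subseteq\LOO$; and the data smallness together with the admissible range of $\delta$ are exactly those under which Theorem~\ref{Cea} applies. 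Hence there is $C>0$, independent of $h$, with
\[
\Vert (\bu-\bu_h,\bomega-\bomega_h)\Vert_{X}+\|p-p_h\|_{0,\O}\le C\inf_{(\bv_h,\btheta_h,q_h)}\bigl\{\nnorm{\bu-\bv_h}_{1,\O}+\|\bomega-\btheta_h\|_{0,\O}+\|p-q_h\|_{0,\O}\bigr\}.
\]

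Next I would estimate the three infima separately. For velocity, since $\mathbb{P}_{1\mathbf{n}}(K)^d\supseteq\P_1(K)^d$, a Scott--Zhang-type Bernardi--Raugel interpolation operator $\mathcal{I}_h:\HusO^d\cap\HCUO^d\to V_h$ is well defined for $s>1/2$ and satisfies $\|\bu-\mathcal{I}_h\bu\|_{1,\O}\le Ch^s\|\bu\|_{1+s,\O}$; using the norm equivalence $\nnorm{\bv}_{1,\O}\le\sqrt{3}\,\|\bv\|_{1,\O}$ on $\HCUO^d$ (a consequence of $\|\nabla\bv\|_{0,\O}^2=\|\curl\bv\|_{0,\O}^2+\|\vdiv\bv\|_{0,\O}^2$) this controls the first term by $Ch^s\|\bu\|_{1+s,\O}$. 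For pressure I would take $q_h$ to be the $\L^2(\O)$-orthogonal projection onto the piecewise constants intersected with $\LOO$, which yields $\|p-q_h\|_{0,\O}\le Ch^s\|p\|_{s,\O}$ for $s\in(1/2,1]$. For vorticity I would use a Cl\'ement/Scott--Zhang interpolant onto continuous piecewise linears in the $W_h^1$ case and the $\L^2$-projection onto discontinuous piecewise constants in the $W_h^2$ case; in both cases $\|\bomega-\btheta_h\|_{0,\O}\le Ch^s\|\bomega\|_{s,\O}$ for $s\in(1/2,1]$. The restriction $s\le 1$ is precisely what caps the global rate here, consistent with the lowest-order character of the Bernardi--Raugel/$\mathbb{P}_0$ pair.

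Substituting these three bounds into the C\'ea estimate and absorbing all mesh-independent constants into a single $\hat{C}$ gives the claimed inequality. I do not expect a genuine obstacle; the points requiring some care are (i) justifying the Bernardi--Raugel approximation estimate at the low regularity $s>1/2$, since the canonical face-based degrees of freedom of the element must be replaced by a Scott--Zhang-type averaging in order to define $\mathcal{I}_h$ on $\HusO^d$, and (ii) making explicit the equivalence between $\nnorm{\cdot}_{1,\O}$ and the usual $\H^1$-norm on $\HCUO^d$, so that the velocity approximation measured in $\nnorm{\cdot}_{1,\O}$ follows from the standard $\|\cdot\|_{1,\O}$-estimate.
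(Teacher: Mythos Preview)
Your proposal is correct and follows essentially the same approach as the paper, which does not give an explicit proof for this theorem but indicates (as for Theorems~\ref{teoTaylorHood} and~\ref{teoMINI}) that the result follows directly from the C\'ea estimate in Theorem~\ref{Cea} combined with the standard approximation properties of the spaces in \eqref{set3}. The additional care you take regarding the Scott--Zhang-type interpolant for the low-regularity range $s\in(1/2,1]$ and the norm equivalence on $\HCUO^d$ are appropriate refinements that the paper leaves implicit.
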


\section{Numerical results}\label{sec:results}
In this section, we present some numerical experiments that serve to verify numerically the convergence rates predicted by Theorems \ref{teoTaylorHood}, \ref{teoMINI}, and that illustrate the performance of the proposed methods in typical incompressible flow problems. We implement the finite element routines using the library FEniCS \cite{alnaes15}. The nonlinear systems are solved with a Newton--Raphson method with zero initial guess, and prescribing a tolerance of $10^{-8}$ on the either  absolute or relative $\ell^\infty$ norm of the residuals. Unless otherwise specified, all linear solves performed at each nonlinear iteration step are conducted with the direct solver MUMPS \cite{mumps}.

\begin{table}[t!]
\begin{center}
\caption{Example 1A: Accuracy verification in 2D for different finite element families for the approximation of velocity and pressure, whereas for the vorticity, discontinuous $\mathbb{P}_1$ elements (the space $W_h^2$) are used in all cases.}\label{table:2d}
\begin{tabular*}{\textwidth}{@{\extracolsep{\fill}}rccccccc}
\hline
DoF & $h$&$\nnorm{\bu-\bu_h}_{1,\O}$& $r(\bu)$ &$\Vert \bomega-\bomega_h\Vert_{0,\O}$&$r(\bomega)$& $\Vert p-p_h\Vert_{0,\O}$ & $r(p)$\cr
\hline
\multicolumn{8}{c}{Taylor--Hood element}\cr
\hline 
    84 & 0.707 & 8.52e-01 & -- & 5.44e-01 & -- & 2.33e-01 & -- \\
   284 & 0.354 & 2.49e-01 & 1.772 & 1.41e-01 & 1.948 & 4.64e-02 & 2.326\\
  1044 & 0.177 & 5.78e-02 & 2.110 & 3.35e-02 & 2.073 & 7.38e-03 & 2.651\\
  4004 & 0.088 & 1.29e-02 & 2.163 & 8.21e-03 & 2.028 & 1.67e-03 & 2.148\\
 15684 & 0.044 & 3.05e-03 & 2.081 & 2.04e-03 & 2.008 & 4.06e-04 & 2.038\\
 62084 & 0.022 & 7.50e-04 & 2.024 & 5.09e-04 & 2.003 & 1.01e-04 & 2.010\\
247044 & 0.011 & 1.87e-04 & 2.006 & 1.27e-04 & 2.001 & 2.51e-05 & 2.003\\
\hline
\multicolumn{8}{c}{MINI-element}\\
\hline 
     68 & 0.707 & 2.58e+0 & -- & 1.05e+0 & -- & 4.26e-01 & -- \\
   236 & 0.354 & 1.53e+0 & 0.760 & 4.40e-01 & 1.261 & 9.12e-02 & 2.224\\
   884 & 0.177 & 7.69e-01 & 0.988 & 2.16e-01 & 1.024 & 2.30e-02 & 1.989\\
  3428 & 0.088 & 3.83e-01 & 1.007 & 1.07e-01 & 1.020 & 5.71e-03 & 2.009\\
 13508 & 0.044 & 1.91e-01 & 1.003 & 5.30e-02 & 1.008 & 1.51e-03 & 1.917\\
 53636 & 0.022 & 9.55e-02 & 1.001 & 2.65e-02 & 1.002 & 4.19e-04 & 1.851\\
213764 & 0.011 & 4.77e-02 & 1.000 & 1.32e-02 & 1.000 & 1.22e-04 & 1.777\\
\hline
\multicolumn{8}{c}{Bernardi--Raugel element}\\
\hline 
66 & 0.707 & 1.07e+0 & -- & 8.26e-01 & -- & 2.79e-01 & --\\
234 & 0.354 & 5.14e-01 & 1.067 & 3.46e-01 & 1.252 & 1.50e-01 & 0.892 \\
882 & 0.177 & 2.70e-01 & 0.928 & 1.81e-01 & 0.931 & 7.15e-02 & 1.075 \\
3426 & 0.088 & 1.40e-01 & 0.947 & 9.58e-02 & 0.924 & 3.41e-02 & 1.066 \\
13506 & 0.044 & 7.08e-02 & 0.984 & 4.86e-02 & 0.977 & 1.67e-02 & 1.026 \\
53634 & 0.022 & 3.55e-02 & 0.995 & 2.44e-02 & 0.992 & 8.33e-03 & 1.008 \\
213762 & 0.011 & 1.77e-02 & 0.998 & 1.22e-02 & 0.997 & 4.16e-03 & 1.002 \\
\hline
\multicolumn{8}{c}{Crouzeix--Raviart element}\\
\hline 
    65 & 0.707 & 2.33e+0 & -- & 1.92e+0 & -- & 4.39e-01 & -- \\
   241 & 0.354 & 2.19e+0 & 0.084 & 1.14e+0 & 0.754 & 4.02e-01 & 0.126\\
   929 & 0.177 & 1.38e+0 & 0.748 & 5.47e-01 & 1.057 & 2.25e-01 & 0.962\\
  3649 & 0.088 & 6.60e-01 & 1.069 & 2.35e-01 & 1.218 & 9.02e-02 & 1.321\\
 14465 & 0.044 & 3.21e-01 & 1.038 & 1.10e-01 & 1.091 & 4.12e-02 & 1.131\\
 57601 & 0.022 & 1.58e-01 & 1.024 & 5.42e-02 & 1.026 & 2.00e-02 & 1.039\\
229889 & 0.011 & 7.82e-02 & 1.014 & 2.69e-02 & 1.007 & 9.95e-03 & 1.010\\
\hline
\end{tabular*}
\end{center}
\end{table}

\begin{table}[t!]
\begin{center}
\caption{Example 1A: Effect of the augmentation constants on the convergence properties. Here we only use Taylor--Hood type of elements (compare with Table~\ref{table:2d}, top rows) and discontinuous vorticity approximation, $W_h^2$ with $k = 1$.}\label{table:2d-kappa}
\begin{tabular*}{\textwidth}{@{\extracolsep{\fill}}rccccccc}
\hline
DoF & $h$&$\nnorm{\bu-\bu_h}_{1,\O}$& $r(\bu)$ &$\Vert \bomega-\bomega_h\Vert_{0,\O}$&$r(\bomega)$& $\Vert p-p_h\Vert_{0,\O}$ & $r(p)$\cr
\hline
\multicolumn{8}{c}{$\kappa_1 = \kappa_2 = 0$}\\
\hline 
    84 & 0.707 & 1.01e+00 & -- & 5.52e-01 & -- & 2.34e-01 & --\\
   284 & 0.354 & 3.99e-01 & 1.333 & 1.52e-01 & 1.860 & 5.43e-02 & 2.109\\
  1044 & 0.177 & 1.71e-01 & 1.224 & 3.75e-02 & 2.020 & 9.12e-03 & 2.573\\
  4004 & 0.088 & 7.93e-02 & 1.108 & 9.25e-03 & 2.018 & 2.07e-03 & 2.140\\
 15684 & 0.044 & 4.01e-02 & 0.986 & 2.30e-03 & 2.007 & 5.04e-04 & 2.038\\
 62084 & 0.022 & 2.00e-02 & 1.002 & 5.72e-04 & 2.008 & 1.25e-04 & 2.015\\
 247044 & 0.011 & 1.00e-02 & 0.995 & 1.42e-04 & 2.006 & 3.11e-05 & 2.004\\
 \hline
\multicolumn{8}{c}{$\kappa_1 = 2/3 \nu_0$, $\kappa_2= 0$}\\
\hline 
     84 & 0.707 & 1.01e+00 & -- & 5.52e-01 & -- & 2.34e-01 & --\\
   284 & 0.354 & 3.99e-01 & 1.333 & 1.52e-01 & 1.860 & 5.43e-02 & 2.109\\
  1044 & 0.177 & 1.71e-01 & 1.224 & 3.75e-02 & 2.020 & 9.12e-03 & 2.573\\
  4004 & 0.088 & 7.93e-02 & 1.108 & 9.25e-03 & 2.018 & 2.07e-03 & 2.140\\
 15684 & 0.044 & 4.01e-02 & 0.986 & 2.30e-03 & 2.007 & 5.04e-04 & 2.038\\
 62084 & 0.022 & 2.00e-02 & 1.002 & 5.72e-04 & 2.008 & 1.25e-04 & 2.015\\
247044 & 0.011 & 1.00e-02 & 0.995 & 1.42e-04 & 2.006 & 3.11e-05 & 2.004\\
 \hline
\multicolumn{8}{c}{$\kappa_1 = 0$, $\kappa_2= 0.5\nu_0$}\\
\hline
    84 & 0.707 & 8.50e-01 & -- & 5.44e-01 & -- & 2.33e-01 & --\\
   284 & 0.354 & 2.50e-01 & 1.766 & 1.41e-01 & 1.948 & 4.64e-02 & 2.326\\
  1044 & 0.177 & 5.80e-02 & 2.107 & 3.35e-02 & 2.073 & 7.38e-03 & 2.651\\
  4004 & 0.088 & 1.29e-02 & 2.164 & 8.21e-03 & 2.028 & 1.67e-03 & 2.148\\
 15684 & 0.044 & 3.06e-03 & 2.082 & 2.04e-03 & 2.008 & 4.06e-04 & 2.038\\
 62084 & 0.022 & 7.51e-04 & 2.026 & 5.09e-04 & 2.003 & 1.01e-04 & 2.010\\
247044 & 0.011 & 1.87e-04 & 2.007 & 1.27e-04 & 2.001 & 2.51e-05 & 2.003\\
\hline
\multicolumn{8}{c}{$\kappa_1 = 2/3 \nu_0$, $\kappa_2 = 0.1\nu_0$}\\
\hline 
    84 & 0.707 & 9.50e-01 & -- & 5.49e-01 & -- & 2.31e-01 & --\\
   284 & 0.354 & 3.38e-01 & 1.490 & 1.47e-01 & 1.897 & 5.24e-02 & 2.141\\
  1044 & 0.177 & 1.00e-01 & 1.757 & 3.45e-02 & 2.092 & 8.01e-03 & 2.711\\
  4004 & 0.088 & 2.31e-02 & 2.117 & 8.26e-03 & 2.065 & 1.68e-03 & 2.255\\
 15684 & 0.044 & 4.36e-03 & 2.403 & 2.04e-03 & 2.018 & 4.06e-04 & 2.048\\
 62084 & 0.022 & 8.63e-04 & 2.336 & 5.09e-04 & 2.003 & 1.01e-04 & 2.011\\
247044 & 0.011 & 1.95e-04 & 2.148 & 1.27e-04 & 2.001 & 2.51e-05 & 2.003\\
\hline
\multicolumn{8}{c}{$\kappa_1 = 2/3 \nu_0$, $\kappa_2 = 10\nu_0$}\\
\hline
    84 & 0.707 & 7.49e-01 & -- & 5.87e-01 & -- & 3.98e-01 & --\\
   284 & 0.354 & 1.93e-01 & 1.958 & 1.53e-01 & 1.940 & 4.84e-02 & 3.039\\
  1044 & 0.177 & 4.83e-02 & 1.997 & 3.60e-02 & 2.086 & 7.85e-03 & 2.625\\
  4004 & 0.088 & 1.20e-02 & 2.013 & 8.48e-03 & 2.088 & 1.69e-03 & 2.214\\
 15684 & 0.044 & 2.98e-03 & 2.004 & 2.06e-03 & 2.039 & 4.07e-04 & 2.055\\
 62084 & 0.022 & 7.45e-04 & 2.000 & 5.11e-04 & 2.013 & 1.01e-04 & 2.014\\
247044 & 0.011 & 1.86e-04 & 1.999 & 1.27e-04 & 2.004 & 2.51e-05 & 2.004\\
\hline
\end{tabular*}
\end{center}
\end{table}

\begin{table}[t!]
\begin{center}
\caption{Example 1B: Effect of the augmentation constants on the convergence properties. Here we only use Taylor--Hood type of elements (compare with Table~\ref{table:2d}, top rows) and continuous vorticity approximation, $W_h^2$ with $k = 1$.}\label{table:2d-kappa-cont}
\begin{tabular*}{\textwidth}{@{\extracolsep{\fill}}rccccccc}
\hline
DoF & $h$&$\nnorm{\bu-\bu_h}_{1,\O}$& $r(\bu)$ &$\Vert \bomega-\bomega_h\Vert_{0,\O}$&$r(\bomega)$& $\Vert p-p_h\Vert_{0,\O}$ & $r(p)$\\
 \hline
\multicolumn{8}{c}{$\kappa_1 = 2/3 \nu_0$, $\kappa_2= 0$}\\
\hline 
    69 & 0.707 & 3.84e+00 & -- & 9.57e-01 & -- & 1.91e-01 & -- \\
   213 & 0.354 & 2.37e+00 & 1.249 & 3.19e-01 & 1.584 & 1.23e-01 & 0.641\\
   741 & 0.177 & 1.93e+00 & 1.182 & 5.53e-02 & 2.528 & 3.86e-02 & 1.670\\
  2757 & 0.088 & 4.43e-01 & 2.119 & 1.08e-02 & 2.361 & 3.50e-03 & 3.461\\
 10629 & 0.044 & 1.97e-01 & 1.169 & 2.55e-03 & 2.080 & 6.02e-04 & 2.540\\
 41733 & 0.022 & 9.88e-02 & 0.997 & 6.33e-04 & 2.010 & 1.35e-04 & 2.163\\
165381 & 0.011 & 4.96e-02 & 0.995 & 1.58e-04 & 2.002 & 3.17e-05 & 2.084\\
 \hline
\multicolumn{8}{c}{$\kappa_1 = 0$, $\kappa_2= 0.5\nu_0$}\\
\hline
     69 & 0.707 & 3.26e+00 & -- & 9.05e-01 & -- & 1.31e-01 & --\\
   213 & 0.354 & 2.58e+00 & 1.106 & 3.29e-01 & 1.458 & 1.55e-01 & -0.240\\
   741 & 0.177 & 2.14e+00 & 0.238 & 7.69e-02 & 2.098 & 3.46e-02 & 2.159\\
  2757 & 0.088 & 1.03e+00 & 1.054 & 1.40e-02 & 2.459 & 4.71e-03 & 2.877\\
 10629 & 0.044 & 5.12e-01 & 1.012 & 3.19e-03 & 2.131 & 8.97e-04 & 2.392\\
 41733 & 0.022 & 2.66e-01 & 0.945 & 7.96e-04 & 2.005 & 3.40e-04 & 1.400\\
165381 & 0.011 & 1.90e-01 & 0.483 & 2.33e-04 & 1.771 & 2.39e-04 & 0.510\\
\hline
\multicolumn{8}{c}{$\kappa_1 = 2/3 \nu_0$, $\kappa_2 = 0.1\nu_0$}\\
\hline 
    69 & 0.707 & 3.61e+00 & -- & 9.38e-01 & -- & 1.66e-01 & -- \\
   213 & 0.354 & 2.92e+00 & 0.357 & 2.95e-01 & 1.667 & 1.25e-01 & 0.413\\
   741 & 0.177 & 9.29e-01 & 1.651 & 5.16e-02 & 2.515 & 2.26e-02 & 2.467\\
  2757 & 0.088 & 1.55e-01 & 2.582 & 1.05e-02 & 2.304 & 1.94e-03 & 3.539\\
 10629 & 0.044 & 2.18e-02 & 2.829 & 2.53e-03 & 2.046 & 4.08e-04 & 2.249\\
 41733 & 0.022 & 2.89e-03 & 2.918 & 6.31e-04 & 2.004 & 1.01e-04 & 2.020\\
165381 & 0.011 & 3.99e-04 & 2.856 & 1.58e-04 & 2.001 & 2.51e-05 & 2.003\\
\hline
\multicolumn{8}{c}{$\kappa_1 = 2/3 \nu_0$, $\kappa_2 = 10\nu_0$}\\
\hline
    69 & 0.707 & 1.65e+00 & -- & 8.60e-01 & -- & 2.56e-01 & -- \\
   213 & 0.354 & 1.08e+00 & 0.391 & 2.83e-01 & 1.604 & 6.89e-02 & 1.894\\
   741 & 0.177 & 4.64e-01 & 1.220 & 4.73e-02 & 2.582 & 9.53e-03 & 2.854\\
  2757 & 0.088 & 1.22e-01 & 1.927 & 1.04e-02 & 2.184 & 1.74e-03 & 2.456\\
 10629 & 0.044 & 2.25e-02 & 2.437 & 2.53e-03 & 2.037 & 4.07e-04 & 2.093\\
 41733 & 0.022 & 3.37e-03 & 2.741 & 6.31e-04 & 2.004 & 1.01e-04 & 2.015\\
165381 & 0.011 & 4.77e-04 & 2.820 & 1.58e-04 & 2.001 & 2.51e-05 & 2.003\\
\hline
\end{tabular*}
\end{center}
\end{table}

\subsection{Example 1: Convergence tests in 2D and 3D}
To numerically investigate the accuracy of the proposed finite element formulation, we consider the unit square and unit cube domains $\O = (0,1)^2$, $\O = (0,1)^3$ discretised into uniform triangular/tetrahedral elements. Sequences of successively refined meshes are constructed and the numerical solutions obtained on each refinement level are compared against manufactured exact solutions in the 2D and 3D cases, defined respectively as 
\begin{gather*}
\bu(x,y)=\begin{pmatrix}\cos(\pi x)\sin(\pi y)\\
-\sin(\pi x)\cos(\pi y)\end{pmatrix}, \quad 
\bu(x,y,z) = \begin{pmatrix}\sin(\pi x)\cos(\pi y)\cos(\pi z)\\ -2\cos(\pi x)\sin(\pi y)\cos(\pi z)\\ \cos(\pi x)\cos(\pi y)\sin(\pi z)\end{pmatrix}, \\
p(x,y) = \sin(\pi x)\sin(\pi y), \quad p(x,y,z)=1-\cos(xyz)\sin(xyz),
\end{gather*}
with $\bomega = \curl\bu$, with permeability $\kappa = 0.1$, with the following distributions of 2D and 3D viscosities
\[\nu(x,y) = \nu_0 + (\nu_1 - \nu_0) \cos(\pi xy)\cos(\pi xy), \quad 
\nu(x,y,z) = \nu_0 + (\nu_1 - \nu_0) x^2y^2z^2,\]
and with $\sigma(\bx) = \kappa^{-1}\nu(\bx)$. The velocities are divergence-free. Moreover, the viscosities are smooth, uniformly bounded, and characterised by the parameters $\nu_0 = 0.1$, $\nu_1 = 1$. Note that more realistic viscosity profiles might exhibit high gradients and variations that are typically non-smooth (e.g. random distributions) and in such cases taking gradients becomes prohibitive. 
The augmentation constants assume the values $\kappa_1 = \frac{2}{3}\nu_0$ and $\kappa_2 = \frac12\nu_0$. Since the exact velocity is enforced essentially everywhere on the boundary, the mean value of pressure is to be fixed (to match that of the exact pressure). This is done with a real Lagrange multiplier. Moreover, the source term $\ff$ is constructed from the momentum balance equation so that the closed-form solutions above become an exact solution of the problem.

\begin{table}[t!]
\begin{center}
\caption{Example 1C: Error profiles and experimental rates of convergence for velocity, vorticity, and pressure  approximation in 3D generated with different finite element families.}\label{table:3d}
\begin{tabular*}{\textwidth}{@{\extracolsep{\fill}}rccccccc}
\hline
DoF & $h$&$\nnorm{\bu-\bu_h}_{1,\O}$& $r(\bu)$ &$\Vert \bomega-\bomega_h\Vert_{0,\O}$&$r(\bomega)$& $\Vert p-p_h\Vert_{0,\O}$ & $r(p)$\\ 
\hline
\multicolumn{8}{c}{Taylor--Hood and $\mathbb{P}_1$ ($W_h^1$)}\\
\hline 
   484 & 0.866 & 1.43e+0 & -- & 1.14e+0 & -- & 1.28e-01 & --\\
  2688 & 0.433 & 3.78e-01 & 1.925 & 3.20e-01 & 1.833 & 1.41e-02 & 3.189\\
 17656 & 0.217 & 9.57e-02 & 1.982 & 6.85e-02 & 2.223 & 1.61e-03 & 3.130\\
127464 & 0.108 & 2.32e-02 & 2.047 & 1.62e-02 & 2.080 & 2.26e-04 & 2.834\\
967624 & 0.054 & 5.60e-03 & 1.929 & 3.99e-03 & 2.095 & 5.36e-05 & 2.387\\
\hline
\multicolumn{8}{c}{MINI-element and $\mathbb{P}_1$ ($W_h^1$)}\\
\hline 
    334 & 0.866 & 5.29e+0 & -- & 1.78e+0 &-- & 7.75e-01 & --\\
  2028 & 0.433 & 2.93e+0 & 0.852 & 7.23e-01 & 1.301 & 4.18e-01 & 0.890\\
 14320 & 0.217 & 1.29e+0 & 1.189 & 2.22e-01 & 1.703 & 1.10e-01 & 1.926\\
108120 & 0.108 & 6.05e-01 & 1.089 & 6.45e-02 & 1.785 & 2.93e-02 & 1.910\\
841384 & 0.054 & 2.96e-01 & 1.031 & 1.90e-02 & 1.765 & 8.21e-03 & 1.835\\
\hline
\multicolumn{8}{c}{Crouzeix--Raviart and $\mathbb{P}_0$ ($W_h^2$ with $k = 0$)}\\
\hline 
   553 & 0.866 & 2.84e-01 & -- & 1.92e-01 & -- & 7.72e-02 & --\\
  4129 & 0.433 & 1.39e-01 & 1.028 & 7.59e-02 & 1.343 & 3.62e-02 & 1.094\\
 31873 & 0.217 & 4.58e-02 & 1.602 & 2.48e-02 & 1.616 & 1.61e-02 & 1.167\\
250369 & 0.108 & 1.34e-02 & 1.774 & 7.96e-03 & 1.637 & 7.85e-03 & 1.036\\
1984513& 0.054 & 4.33e-03 & 1.588 & 2.81e-03 & 1.609 & 3.82e-03 & 1.121\\
\hline
\end{tabular*}
\end{center}
\end{table}

Errors between exact and approximate numerical solutions in the individual norms for velocity, vorticity and pressure are shown with respect to the number of degrees of freedom (DoF) in Tables~\ref{table:2d} and \ref{table:3d} for the 2D and 3D cases, respectively. These errors were produced with the use of Taylor--Hood \cite{HT}, MINI-element \cite{arnold84}, Bernardi--Raugel elements \cite{BR85}, and for sake of illustration, for 2D we include as well the non-conforming Crouzeix--Raviart elements with facet stabilisation \cite{crouzeix73}. We also tabulate the local experimental rate of error decay for the generic vector or scalar field $s$, computed as  $r(s) =\log({\tt e}(s)/\widehat{{\tt e}}(s))[\log(h/\widehat{h})]^{-1}$,
where ${\tt e}$ and $\widehat{{\tt e}}$ denote errors produced on two consecutive
meshes associated with mesh sizes $h$ and $\widehat{h}$, respectively. 
 In all instances we observe a monotonic optimal convergence, as supported by Theorems~\ref{teoTaylorHood}, \ref{teoMINI}, \ref{teobernardi}. For MINI-elements we observe a higher convergence than the one anticipated for pressure. For these tests the number of Newton--Raphson steps required to achieve the prescribed tolerance was, in average, 3. Similar results as those reported herein were obtained by replacing the viscosities described above, with profiles exhibiting higher gradients. 

 \ric{We also explore the  influence of the augmentation constants $\kappa_1,\kappa_2$ on the convergence properties, and the results are collected in Table~\ref{table:2d-kappa}. The second block in the table shows that for $\kappa_1 =0$ the convergence is not affected. 
 The first and third blocks in the table indicate that this is needed to recover the optimal convergence of the velocity approximation. 

We also note that for these cases in Table~\ref{table:2d-kappa} we use a discontinuous approximation for vorticity, $W_h^2$ with $k = 1$. The need of $\kappa_1$ (which we recall it enforces that $\bomega_h = \curl\bu_h$) is perhaps better exemplified in Table~\ref{table:2d-kappa-cont}. Using a continuous vorticity approximation (with   $W_h^1$ and $k = 1$), a stabilisation constant $\kappa_1$ satisfying the bounds specified in Lemma \eqref{lem-elip} produces optimal convergence rates, whereas in contrast with Table~\ref{table:2d-kappa}, the value $\kappa_1 = 0$ makes the convergence to deteriorate. }

\begin{figure}[t!]
\begin{center}
\includegraphics[width = 0.475\textwidth]{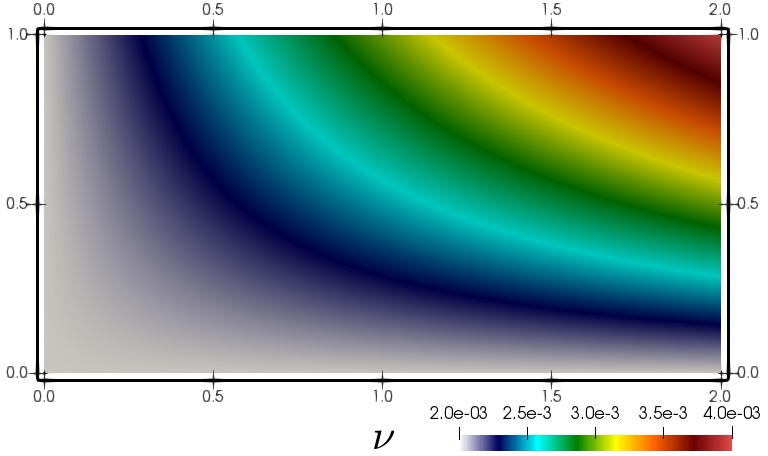}
\includegraphics[width = 0.475\textwidth]{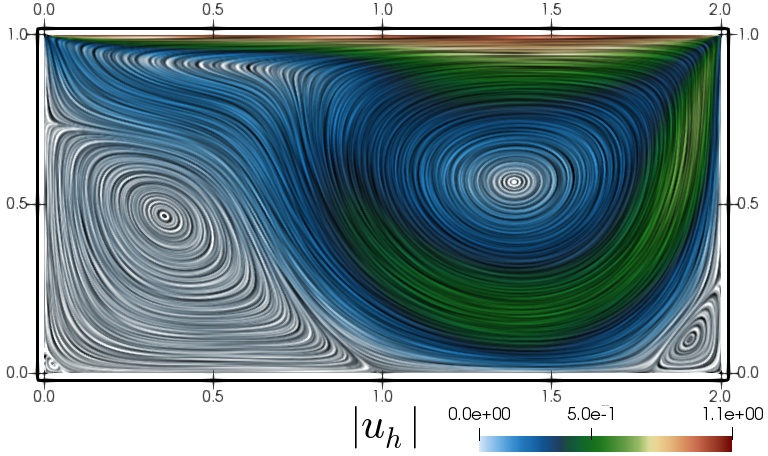}\\
\includegraphics[width = 0.475\textwidth]{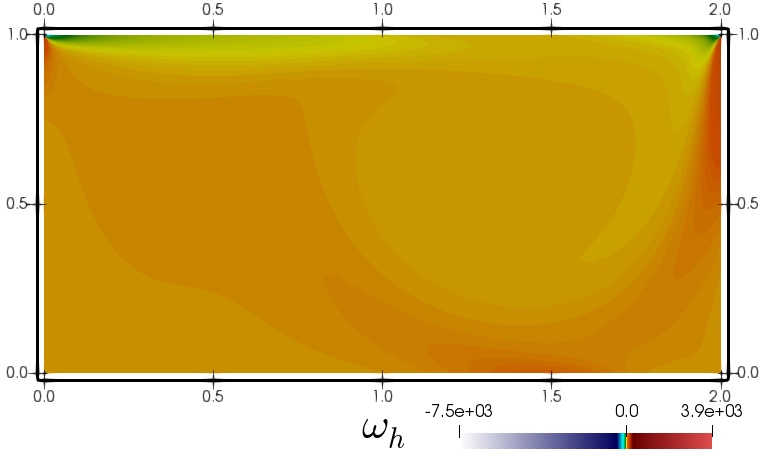}
\includegraphics[width = 0.475\textwidth]{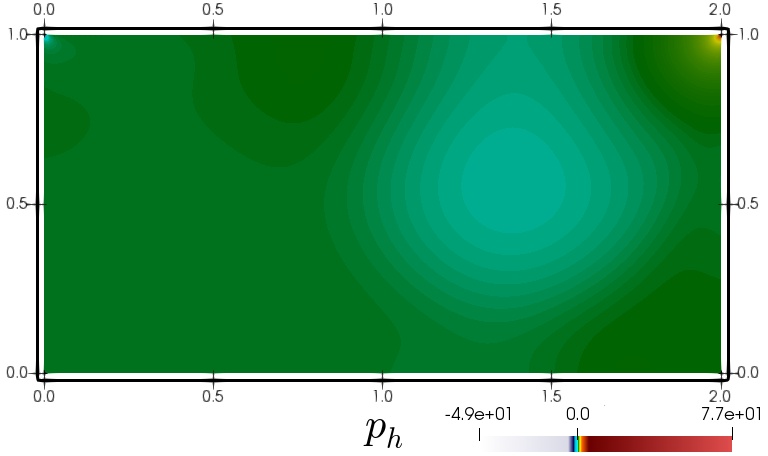}\\
\includegraphics[width = 0.475\textwidth]{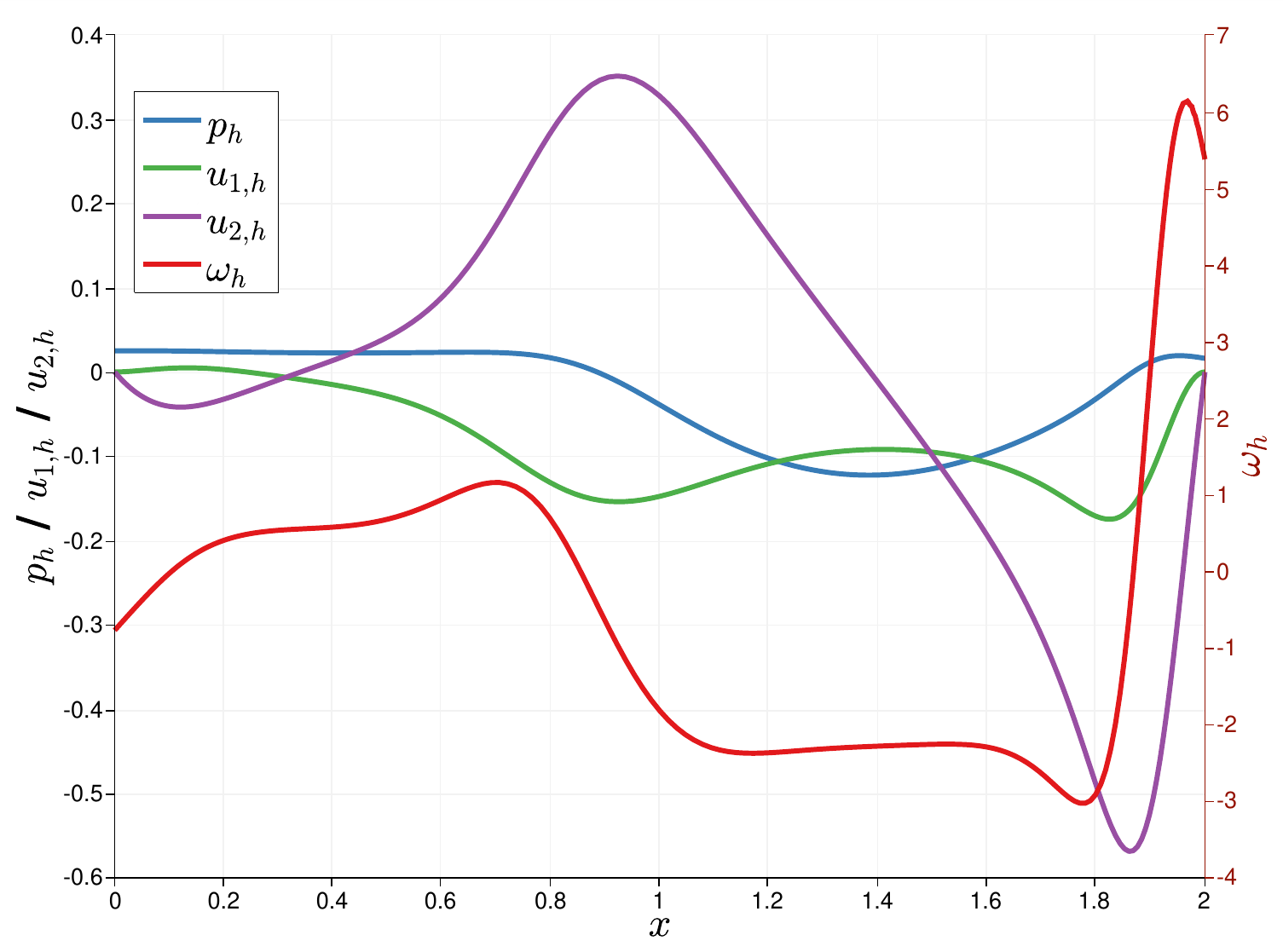}
\includegraphics[width = 0.475\textwidth]{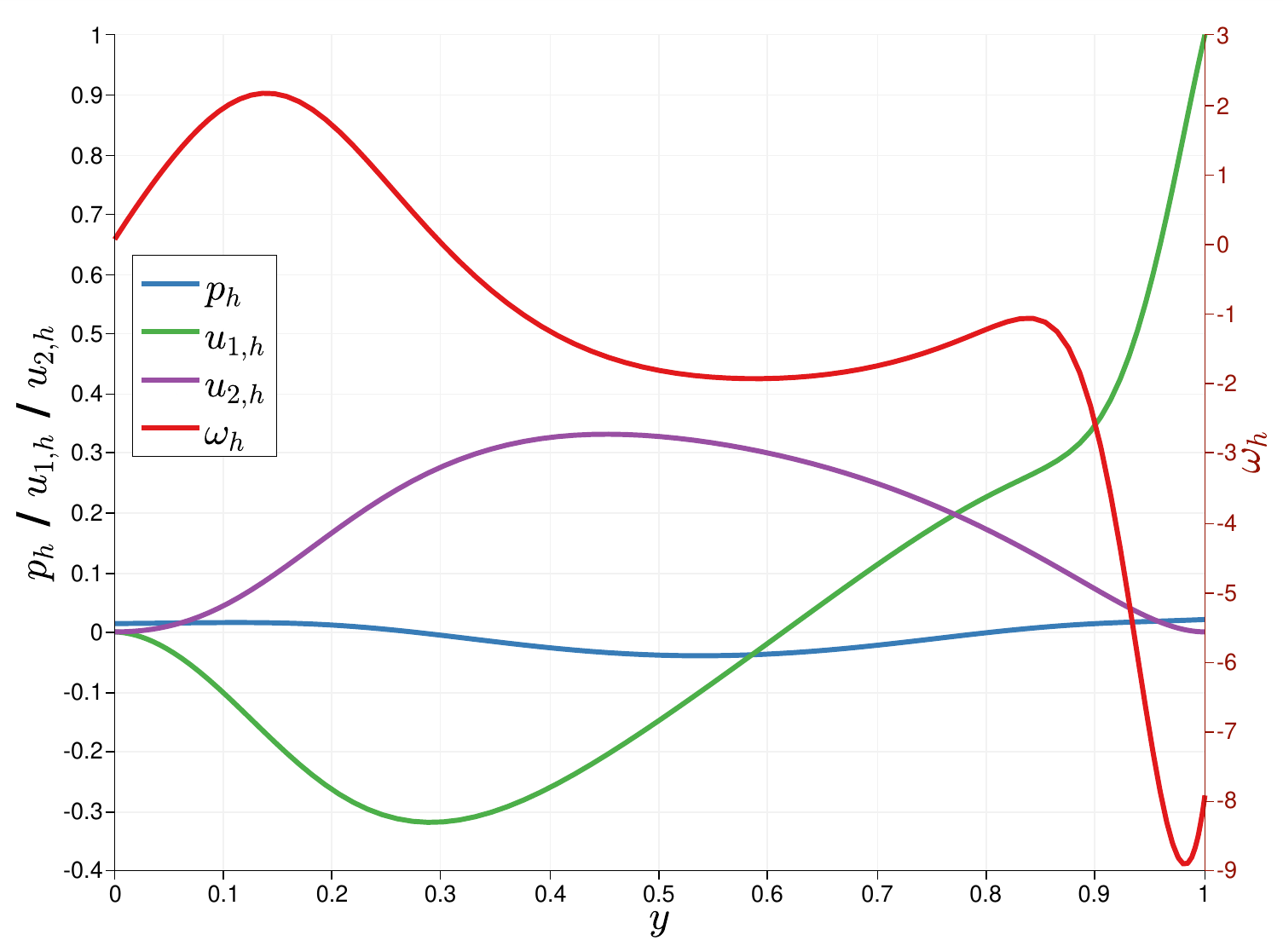}
\end{center}
\caption{Example 2: Lid-driven wide cavity. Distribution of variable kinematic viscosity, and portrait of approximate solutions (velocity magnitude with velocity line integral convolution paths, scalar vorticity, and pressure profile). The bottom panels show cuts on the horizontal ($y=0.5$) and vertical ($x=1$) midlines of the cavity along with solution profiles.}\label{fig:ex02}
\end{figure} 

\subsection{Example 2: Lid-driven wide cavity flow}
Consider now the modified lid-driven cavity benchmark problem in the case of a wide two-dimensional domain $\Omega = (0,2)\times(0,1)$. The mesh is structured and with $200\times100$ vertices. The grid is stretched to cluster towards the boundaries. We use in this case the MINI-element plus $W_h^1$ space for vorticity. The viscosity is $\nu = \nu_0(1 + \frac12xy)$ with $\nu_0 = 0.002$, and the external force is zero $\ff = \cero$.  The top wall is moving to the right with constant velocity $\bu = (1,0)^{\tt t}$ while   the other sub-boundaries are equipped with a no-slip velocity condition, implying that a discontinuity exists in the Dirichlet datum. Again the mean-value of the pressure is fixed (to be zero) with a real Lagrange multiplier. We stress that, even in the case of constant viscosity, the exact solutions for this classical problem are not known. 
We report the obtained approximate flow patterns in Figure~\ref{fig:ex02}. The first row shows the imposed viscosity profile (which also defines a variable Brinkman parameter $\sigma$), the velocity field with line integral convolution to better visualise the velocity recirculation and vortices (which differs from the usual square lid-driven cavity  flow) and indicating well-resolved flow patterns, and the middle row displays scalar vorticity and pressure distributions. The usual corner singularities are seen in the pressure profiles, but no spurious oscillations are observed. We also show in the bottom row plots of the solutions on the midlines $y=0.5$ and $x=1$.

\begin{figure}[t!]
\begin{center}
\includegraphics[width = 0.475\textwidth]{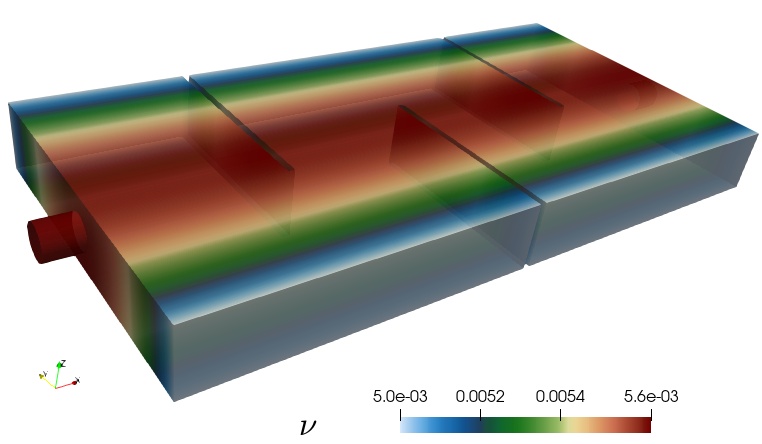}
\includegraphics[width = 0.475\textwidth]{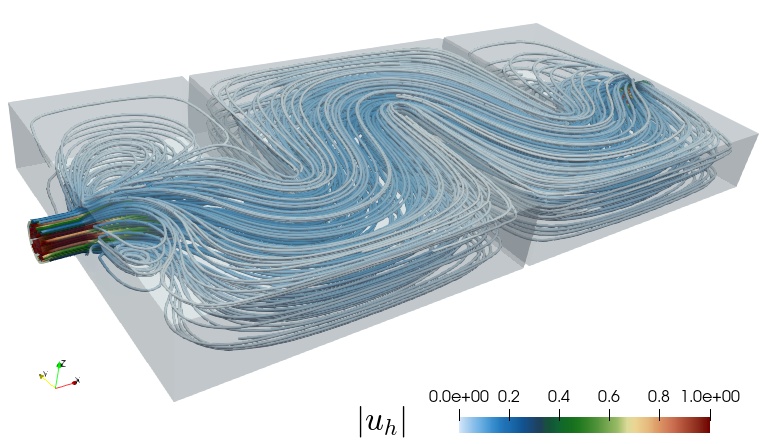}\\
\includegraphics[width = 0.475\textwidth]{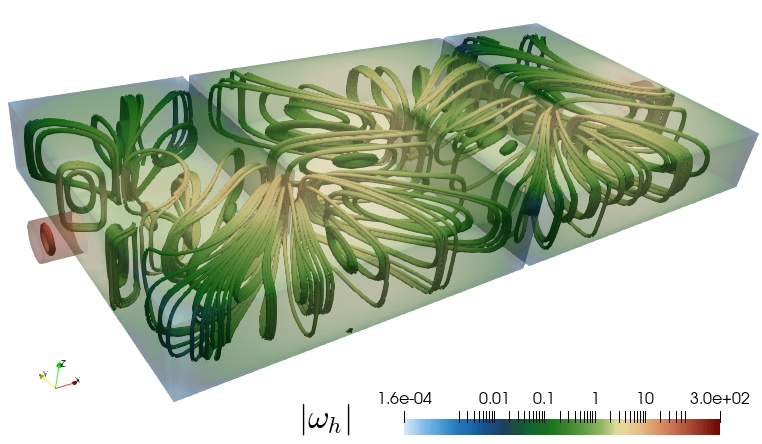}
\includegraphics[width = 0.475\textwidth]{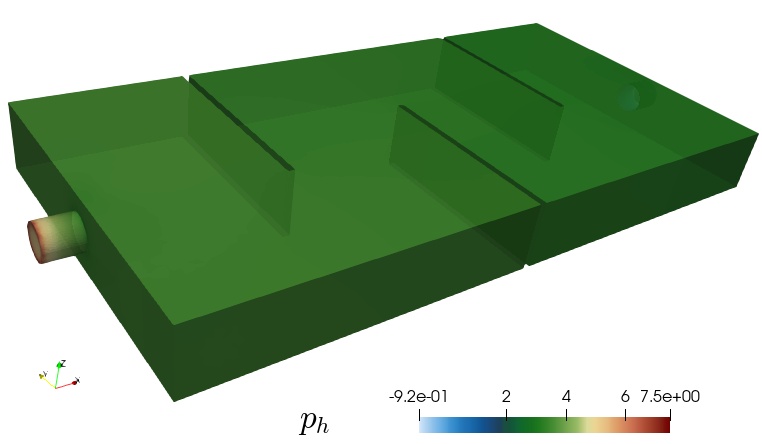}
\end{center}

\caption{Example 3: Flow in a maze-shaped geometry. Distribution of variable kinematic viscosity, and portrait of approximate solutions (velocity magnitude and velocity streamlines, vorticity magnitude and vorticity streamlines, and pressure profile).}\label{fig:ex03}
\end{figure} 
\subsection{Example 3: Flow in a maze-shaped 3D geometry}
To conclude the set of numerical examples we present the simulation of three-dimensional flow over obstacles as those encountered in mazes. 
The geometry and the mesh have been generated using the mesh manipulator Gmsh \cite{geuzaine09}. The resulting tetrahedral mesh has 193375  elements and 40418 vertices. For this example the viscosity assumes the form $\nu = nu_0 (1+ \frac12 y(1-y))$ with $\nu_0 = 0.005$. The channel has a length of 2.2 (aligned with the $x$ axis), a width of 1, and a height of 0.2. 
The boundary of the domain is partitioned into an inflow surface (a disk of radius 0.05 located on the plane $x_{\min}$), an outflow surface (the disk of the same dimensions, located at $x_{\max}$), and the remainder of the boundary is constituted by the walls of the maze. On the inflow, a constant inlet velocity is prescribed $\bu_{\text{in}} = (1,0,0)^{\tt t}$, no-slip velocities are considered on the walls, and zero pseudo-stress is imposed as outflow boundary condition. These boundary conditions do not coincide with the pure Dirichlet case analysed in the paper. As in the previous example, we consider $\ff = \cero$. Dictated by inf-sup stability requirements, we use again the MINI-element plus piecewise linear vorticity approximation in $W_h^1$. 

A graphical illustration regarding the behaviour of the channeling flow is given in Figure \ref{fig:ex03}, showing a higher viscosity near the centre of the channel.  The velocity streamlines plotted on the top-right panel clearly exemplify the flow patterns commonly found for this regime (with maximal Reynolds number of $\text{Re}=\frac{ \text{vel} \cdot \text{diam}}{\nu_0}=\frac{1\cdot 0.1}{0.005}=20$), namely a very mild recirculation near the walls adjacent to the inlet and an almost Poiseuille flow, showing higher velocity magnitude towards the centre of the channel. Also, and as expected, the pressure drops from inlet to outlet, and  it has the imposed zero mean value.


\section{Summary and concluding remarks}\label{sec:concl}
In the present work, we have presented an optimally convergent mixed finite element method for the discretisation of the vorticity-velocity-pressure formulation of the Navier--Stokes equations with non-constant viscosity. We have established the unique solvability of the continuous and discrete variational problems using the theory of fixed--point operators, and have shown the stability and optimal convergence of particular choices of finite element families. We have numerically investigated the performance of the methods on 2D and 3D tests. Some key features of the proposed method are the liberty to choose different inf-sup stable finite element families for the Navier--Stokes equations, the direct and accurate access to vorticity independently and without applying postprocessing, and the flexibility in handling Dirichlet boundary conditions for velocity, as well as in defining outflow conditions. Some of these properties are of marked interest, for instance, when coupling with other effects such as settling mechanisms or doubly-diffusive interactions, where we foresee a direct applicability of the ideas proposed herein. On the other hand, one unavoidable disadvantage of the present method is that it does not produce point-wise divergence-free velocities. \ric{Currently, we are not aware of other formulations in the literature that address the problem in the present form. Therefore a thorough comparison against other discretisations is out of the scope of the present paper.}

\bigskip 
\noindent\textbf{Acknowledgements.}
This work has been partially supported by DICREA-UBB through projects 2020127 IF/R and 2120173 GI/C,  by ANID-Chile through Centro de Modelamiento Matemático (FB210005), Anillo of Computational Mathematics for Desalination Processes (ACT 210087), Fondecyt project 1211265; by Scholarship Program Doctorado Becas Chile 2021 21210945;  by the Monash Mathematics Research Fund S05802-3951284; and by the Ministry of Science and Higher Education of the Russian Federation within the framework of state support for the creation and development of World-Class Research Centers ``Digital biodesign and personalized healthcare'' No. 075-15-2022-304.


\begin{thebibliography}{99}
 
 
\bibitem{abdel20} {\sc M. Abdelwahed, E. Alharbi, N. Chorfi, and H. Ouertani}, {\it Resolution and implementation of the nonstationary vorticity velocity pressure formulation of the Navier–Stokes equations}. Boundary Value Prob., 2020 (2020) e167.

 \bibitem{adams}
\textsc{R.A. Adams, and J.J.F. Fournier}, 
{\it Sobolev Spaces}, Second edition. Pure and Applied Mathematics (Amsterdam), 140. 
Elsevier/Academic Press, Amsterdam, 2003.

\bibitem{alnaes15} 
{\sc M.S Aln{\ae}s, J. Blechta, J. Hake,
A. Johansson, B. Kehlet, A. Logg, C. Richardson, J. Ring, M.E. Rognes,
and G.N. Wells}, {\it The FEniCS project version 1.5}. Arch. 
Numer. Softw., 3(100) (2015) 9--23.

\bibitem{ACPT07} {\sc M. Amara, D. Capatina-Papaghiuc, and
  D. Trujillo}, {\it Stabilized finite element method for
  Navier--Stokes equations with physical boundary conditions}.
  Math. Comp., 76(259) (2007) 1195--1217.
    
\bibitem{ACT} {\sc M. Amara, E. Chac\'on Vera, and D. Trujillo},
    {\it Vorticity--velocity--pressure formulation for Stokes
      problem}.  Math. Comp., 73(248) (2004) 1673--1697.
           
 \bibitem{mumps} 
 {{\sc P.R. Amestoy, I.S. Duff, and J.-Y. L'Excellent},
\textit{Multifrontal parallel distributed symmetric and unsymmetric solvers}. 
 {Comput. Methods Appl. Mech. Engrg.},  {184} (2000) 501--520.}
  
\bibitem{amoura07}  
{\sc K. Amoura, M. Aza\"iez, C. Bernardi, N. Chorfi, and S. Saadi},
{\it Spectral element discretization of the vorticity, velocity and pressure formulation of the Navier--Stokes problem}. Calcolo, 44(3) (2007) 165--188.  

 \bibitem{anaya19b}
\textsc{V. Anaya, A. Bouharguane, D. Mora, C. Reales,
R. Ruiz-Baier, N. Seloula and H. Torres},
\textit{Analysis and approximation of a
vorticity-velocity-pressure formulation for the Oseen equations}.
J. Sci. Comput., 88(3) (2019) 1577--1606.
  
\bibitem{anaya24}
\textsc{V. Anaya, R. Caraballo, B. G\'omez-Vargas, D. Mora, and R. Ruiz-Baier},
\textit{Velocity-vorticity-pressure formulation for the Oseen problem with
variable viscosity}. Calcolo,  {58(4) (2021) e44(1--25)}.
  
\bibitem{anaya15b} {\sc V. Anaya, G.N. Gatica, D. Mora, and
  R. Ruiz-Baier}, {\it An augmented velocity-vorticity-pressure
  formulation for the Brinkman equations}.
  Int. J. Numer. Meth. Fluids, 79(3) (2015) 109-137.
  
  \bibitem{anaya19}
{\textsc{V. Anaya, B. G\'omez-Vargas, D. Mora, and R. Ruiz-Baier},
\textit{Incorporating variable viscosity in vorticity-based formulations for Brinkman equations}. 
Compt. Rend. Math., 357(6) (2019) 552--560.}

\bibitem{anaya16} {\sc V. Anaya, D. Mora, R. Oyarz\'ua, and
  R. Ruiz-Baier}, {\it A priori and a posteriori error analysis of a mixed scheme for the Brinkman problem}.
  Numer. Math., 133(4) (2016) 781-817.

\bibitem{anaya13} {\sc V. Anaya, D. Mora, and R. Ruiz-Baier}, {\it
    An augmented mixed finite element method for the
    vorticity-velocity-pressure formulation of the Stokes equations}.
    Comput. Methods Appl. Mech. Engrg., 267 (2013) 261--274.
 

 \bibitem{arnold84} {{\sc D.N. Arnold, F. Brezzi, and M. Fortin}, {\it A 
stable finite element for the Stokes equations}. Calcolo, 21  (1984) 337--344.}


\bibitem{azaiez06} {\sc  M. Aza\"iez, C. Bernardi, and N. Chorfi}, {\it Spectral discretization of the vorticity, velocity and pressure
formulation of the Navier--Stokes equations}. Numer. Math., 104(1) (2006) 1--26.

\bibitem{benzi12} {\sc M. Benzi, M.A. Olshanskii, L.G. Rebholz, and 
Z. Wang}, {\it Assessment of a vorticity based solver for the Navier--Stokes equations}. 
Comput. Methods Appl. Mech. Engrg., 247-248 (2012) 216--225.


\bibitem{bernardi06} {\sc C. Bernardi, and N. Chorfi}, {\it Spectral
  discretization of the vorticity, velocity, and pressure formulation
  of the Stokes problem}. SIAM J. Numer. Anal., 44(2) (2007) 826--850.
  
  \bibitem{bernardi18}
{\sc C.~Bernardi, S.~Dib, V.~Girault, F.~Hecht, F.~Murat, and T.~Sayah}, {\em
  Finite element methods for {D}arcy’s problem coupled with the heat
  equation}, Numer. Math., 139 (2018) 315--348.
  
\bibitem{BR85} {{\sc  C. Bernardi, and G.  Raugel}, {\it Analysis of some finite elements for the Stokes problem}. 
Math. Comput., 44 (1985) 71--79.}

\bibitem{bochev97} {\sc P.V. Bochev}, {\it Analysis of least-squares
  finite element methods for the Navier--Stokes equations}. SIAM
  J. Numer. Anal., 34(5) (1997) 1817--1844.

    \bibitem{bochev99} {\sc P.B. Bochev}, {\it Negative norm least-squares methods 
  for the velocity-vorticity-pressure Navier--Stokes equations}. 
Numer. Methods PDEs, 15 (1999) 237--256.

\bibitem{bbf-2013} {\sc D. Boffi, F. Brezzi, and M. Fortin}, {\it Mixed
 Finite Element Methods and applications}.
 Springer Series in Computational Mathematics, 44. Springer, Heidelberg (2013).
  
  
\bibitem{chang90} {\sc C.L. Chang, and B.-N. Jiang}, {\it An error
    analysis of least-squares finite element method of
    velocity-pressure-vorticity formulation for the Stokes
    problem}. Comput. Methods Appl.  Mech. Engrg., 84(3) (1990) 247--255.

     
\bibitem{cockburn} {\sc B. Cockburn, and J. Cui}, {\it An analysis
    of HDG methods for the vorticity-velocity-pressure formulation of
    the Stokes problem in three dimensions}. Math. Comp., 81(279)
    (2012) 1355--1368.
    
    \bibitem{crouzeix73}
{{\sc M.~Crouzeix, and P.-A. Raviart}, {\em Conforming and nonconforming finite
  element methods for solving the stationary {S}tokes equations I}. ESAIM:
  Math. Model. Numer. Anal., 7 (1973) 33--75.}
  
  
\bibitem{daikh17} {\sc Y. Daikh and D. Yakoubi}, {\it Spectral discretization of the Navier--Stokes problem with mixed boundary conditions}. Appl. Numer. Math., 118 (2017) 33--49. 


\bibitem{duan03} {\sc H.-Y. Duan, and G.-P. Liang}, {\it On the
  velocity-pressure-vorticity least-squares mixed finite element
  method for the 3D Stokes equations}. SIAM J. Numer. Anal., 41(6)
  (2003) 2114--2130.

\bibitem{DSScmame03} {\sc F. Dubois, M. Sala\"un, and S. Salmon}, {\it
  First vorticity-velocity-pressure numerical scheme for the Stokes
  problem}. Comput. Methods Appl. Mech. Engrg., 192(44--46) (2003)
  4877--4907.

\bibitem{ern98}
\textsc{A. Ern},
\textit{Vorticity-velocity formulation of the Stokes
problem with variable density and viscosity}.
Math. Models Methods Appl. Sci.,  8(2) (1998) 203--218.

\bibitem{G2014}
{\sc G.N. Gatica}, 
{\it A Simple Introduction to the Mixed Finite Element Method. Theory and Applications}.
Springer Briefs in Mathematics, Springer, Cham Heidelberg New York Dordrecht London (2014).
 
 
 \bibitem{gatica20} {\sc G.N. Gatica, R. Oyarz\'ua, and N. Valenzuela}, {\it A five-field augmented fully-mixed finite element method for the Navier--Stokes/Darcy coupled problem}. Compu.  Math. Appl., 80(8) (2020)  1944--1963.
 
 \bibitem{geuzaine09} {{\sc C.\ Geuzaine, and J.-F.\ Remacle},  {\it Gmsh: a
  three-dimensional finite element mesh generator with built-in pre-
  and post-processing facilities}.  Int.\ J.\ Numer.\  Meth.\   Engrg., {79} (2009) 1309--1331.  }

 

\bibitem{gr-1986} {\sc V. Girault, and P.A. Raviart}, {\it Finite
  element methods for Navier--Stokes equations. Theory and algorithms}.
  Springer-Verlag, Berlin (1986).
  
  

 \bibitem{HT} {\sc P. Hood, and C. Taylor}, {\it Numerical solution of the Navier–Stokes equations using the finite
element technique}.  Comput. Fluids, 1 (1973) 1--28.

\bibitem{jenkins14} {\sc E.W. Jenkins, V. John, A. Linke, and L.G. L.G. Rebholz}, {\it On the parameter choice in grad-div stabilization for the Stokes equations}. Adv. Comput. Math., 40 (2014) 491--516. 
  
\bibitem{layton09} {\sc W. Layton, C.C. Manica, M. Neda, M. Olshanskii, and L.G. Rebholz}, {\it On the accuracy of the rotation form in simulations of the Navier--Stokes equations}. J. Comput. Phys., 228(9) (2009) 3433--3447. 

 
 \bibitem{linke16} {\sc A. Linke, and C. Merdon}, {\it Pressure-robustness and discrete Helmholtz projectors in mixed finite element methods for the incompressible Navier--Stokes equations}. Comput. Methods Appl. Mech. Engrg., 311 (2016) 304--326.
 
 \bibitem{olshanskii02} {\sc M.A. Olshanskii, and A. Reusken},
{\it Navier--Stokes equations in rotation form: a robust
multigrid solver for the velocity problem}.
SIAM J. Sci. Comput., 23(5) (2002) 1683--1706.

\bibitem{patil82}
{\sc P.R. Patil, and G. Vaidyanathan},
{\it Effect of variable viscosity on thermohaline convection in a porous medium}.  
J. Hydrology, \ {57}(1-2)  (1982)  147--161.   

\bibitem{payne99}
{\sc L.E. Payne, and B. Straughan},
\textit{Convergence and continuous dependence for the Brinkman-Forchheimer equations}.
Stud. Appl. Math.,  {102}(4)  (1999)  419--439.

\bibitem{Q} {\sc A. Quarteroni, and A. Valli}, {\it Numerical Approximation for Partial Differential Equations}. Springer-Verlag Berlin, 2009.

\bibitem{rudi17}
{\sc J. Rudi, G. Stadler, and O. Ghattas},
{\it Weighted BFBT preconditioner for Stokes flow problems with highly heterogeneous viscosity}. 
SIAM J. Sci. Comput.,  {39}(5)  (2017)  S272--S297.

\bibitem{SScmame07} {\sc M. Sala\"un, and S. Salmon}, {\it Numerical
  stabilization of the Stokes problem in vorticity-velocity-pressure
  formulation}. Comput. Methods Appl. Mech. Engrg., 196(9-12) (2007)
  1767--1786.

\bibitem{SSIMA15} {\sc M. Sala\"un, and S. Salmon}, {\it Low-order
    finite element method for the well-posed bidimensional Stokes
    problem}.  IMA J. Numer. Anal., 35 (2015) 427--453.
      
  \bibitem{tsai05} {\sc C.-C. Tsai, and S.-Y. Yang}, {\it On the velocity-vorticity-pressure least-squares
finite element method for the stationary incompressible Oseen problem}.
J. Comput. Appl. Math., 182(1) (2005) 211--232.
  

\bibitem{vassilevski14} {\sc P.S. Vassilevski, and U. Villa},
{\it A mixed formulation for the Brinkman problem}.
SIAM J. Numer. Anal., 52(1) (2014) 258--281.
    
\end{thebibliography}
\end{document}